\newtheorem{Theorem}{Theorem}[section] 
\newtheorem{Corollary}[Theorem]{Corollary} 
\newtheorem{Proposition}[Theorem]{Proposition}
 \theoremstyle{definition}
 \newtheorem{Definition}[Theorem]{Definition}
  \newtheorem{Example}[Theorem]{Example} 
   \newtheorem{Remark}[Theorem]{Remark}  
   \newtheorem{Question}[Theorem]{Question}
      \numberwithin{equation}{section}
        \newtheorem{Notation}[Theorem]{Notation}
\DeclareMathOperator{\Bir}{Bir}
\DeclareMathOperator{\Image}{Im}
\DeclareMathOperator{\Proj}{Proj} 
\DeclareMathOperator{\Spec}{Spec}
 \DeclareMathOperator{\Dim}{dim}
 \DeclareMathOperator{\edim}{edim}
 \DeclareMathOperator{\HF}{HF}
  \DeclareMathOperator{\codim}{codim}
\DeclareMathOperator{\depth}{depth}
 \DeclareMathOperator{\Ht}{ht}
\DeclareMathOperator{\grade}{grade}
 \DeclareMathOperator{\A}{\boldsymbol{\alpha}}
\newcommand{\binomial}[2]{{#1 \choose #2}}
\newcommand{\fm}{\mathfrak{m}}
\newcommand{\fp}{\mathfrak{p}}
\newcommand{\fq}{\mathfrak{q}}
\newcommand{\fa}{\mathfrak{a}}
\newcommand{\fb}{\mathfrak{b}}
\newcommand{\fn}{\mathfrak{n}}
\def\phi{\varphi}
\def\aa{{\bf a}}
\def\bb{{\bf b}}
\def\xx{{\bf x}}\def\yy{{\bf y}} \def\aa{{\bf a}}
  \def\XX{{\bf X}}\def\YY{{\bf Y}}
\def\ZZ{{\bf Z}}
\newcommand{\llar}{-\kern-5pt-\kern-5pt\longrightarrow}
\def\restr{{\kern-1pt\restriction\kern-1pt}}
\begin{document}
\title{Open loci of ideals with applications to Birational maps}
\author{S. Hamid Hassanzadeh and  Maral Mostafazadehfard }
\email{hamid@im.ufrj.br}\email{maral@im.ufrj.br}
\address{Departamento de Matem\'atica, Centro de Tecnologia, Cidade Universit\'aria da Universidade Federal do Rio de Janeiro, 21941-909 Rio de Janeiro, RJ, Brazil}
\dedicatory{Dedicated to Aron Simis on the occasion of his eightieth birthday.}
\date{\today}


\maketitle
\begin{abstract} In this work we show that  the  loci of ideals in principal class, ideals of grade at least two, and ideals of maximal analytic spread are Zariski open sets in the parameter space.  As an application, we show that the set of birational maps of {\it clear polynomial degree} $d$ over an arbitrary projective variety $X$, denoted by $\Bir(X)_{d}$, is a constructible set. This extends a previous result by Blanc and Furter.
\end{abstract}

\section{Introduction}

In classical commutative ring theory, an open locus of a ring $R$ refers to a set of prime ideals $\fp\in \Spec(R)$ such that $R_{\fp}$ satisfies a particular property. However, in subjects such as Hilbert schemes, the focus is on sets of ideals (in a polynomial ring) that satisfy specific properties, such as having the same Hilbert function. The term "open loci of ideals" refers to the open Zariski subsets of a parametric space whose points correspond to ideals with such properties.

In this paper, we study three properties of homogeneous ideals in a positively graded standard Noetherian domain over a field. The first property, described in \autoref{Elimination}, is belonging to the principal class. The principal class refers to the class of ideals whose codimension is equal to the number of their generators. The second property, presented in  \autoref{LG2}, is having grade at least 2. The grade of an ideal is the maximum length of a regular sequence inside the ideal. Finally, we investigate the Noether Normalization locus in  \autoref{LNn+1}. This locus is determined by ideals with maximum analytic spread, which is defined as the dimension of the special fiber of the ideal over the base ring. We show all of these loci are open.


Despite the interesting algebraic nature of the aforementioned open loci, we have discovered a significant application of them in the study of the group of birational maps over an arbitrary projective variety. In this context, it is worth mentioning some historical background.

Let $k$ be a field and $X$ be an algebraic variety over $k$. The group of birational transformations on $X$ is denoted by $\Bir_k(X)$ or simply $\Bir(X)$. Although this group has been the subject of research for over 160 years, many mysteries about it remain, even in the case where $X=\mathbb{P}^n$.

Regarding the group theoretic structures, there are several classical results for $\Bir(\mathbb{P}^2)$, dating back to Noether and Castelnuovo. However, it was only in 2013 that Cantat and Lamy discovered the non-simplicity and some unusual behaviors of normal subgroups in this group \cite{CL13}. The case of $\Bir(\mathbb{P}^n)$ is more intricate, and less is known about the group structure of $\Bir(X)$ when $X$ is not of general type.  

In 2010, J.-P. Serre \cite{Se10} introduced a topology on $\Bir(X)$ by considering morphisms from algebraic varieties. In 2013, Blanc and Furter proved that there is no structure of an algebraic variety on $\Bir(\mathbb{P}^n)$ for $n \geq 2$, but on the positive side, they showed that for any fixed integer $d$, the set of birational transformations of degree $d$, denoted by $\Bir(\mathbb{P}^n)_d$, has the structure of an algebraic variety. This led to the classification of the irreducible components of $\Bir(\mathbb{P}^2)_d$ by Bisi, Calabri, and Mella in 2015 \cite{BCM15}. In 2017, Blanc raised the question of what kind of algebraic structure can be put on $\Bir_k(X)$ in the survey \cite{B17}.

Let $R=k[x_0,\ldots,x_n]/\mathcal{I}(X)$ be the coordinate ring of $X$, and let $d$ be an integer. An element of $\Bir_k(X)$ can be presented by a set of $n+1$ homogeneous polynomials in $R$ of the same degree $d$. The {\it parameter space} is the space of coefficients of $(n+1)$ polynomials of degree $d$. Using the openness of loci introduced above,  we prove in  \autoref{Tmain} that for any irreducible projective variety $X$ over any infinite field $k$ and any integer $d$, $\Bir(X)_d$ has the structure of a constructible algebraic set. This result recovers \cite[Lemma 2.4 (2)]{BF13}. In \cite[Theorem 34]{pan2012some}, Pan and Rittatore mentioned the constructibility of some families of subsets of $\Bir(X)$ for a rational variety $X$. We show further in \autoref{Tmain}(iii) that $\edim(\Bir(X)_d)\leq (n+1)\HF_{R}(d)-1$ where $\HF_{R}$ is the Hilbert function of $R$.


The first subtlety in proving this result is defining the degree of a birational map over an arbitrary variety $X$. We introduced the concept of ``clear polynomial degree" of a rational map, as defined in  \autoref{Ddegree}. We want to generalize the notion of degree of birational maps for any variety, not just those with factorial coordinate rings. To do this, we need a natural and non-trivial way to measure the degree of a birational map. We took into account the topology introduced by Serre and the construction presented in \cite{BF13}.  In this way, a crucial step is to find a bound for the degree of the inverse of a birational map. This is the content of \autoref{Pinversedegree}.

  \autoref{C1} states that when $\HF_{\mathcal{I}(X)}(d)=0$, $\Bir(X)_d$ is a quasi-projective variety.  For instance, $\Bir(\mathbb{P}^n)_d$ is a quasi-projective variety for all $d$.

{\bf Acknowledgement}. The authors would like to thank C. Araujo, I. Pan, and J. Blanc for their helpful discussions and motivating questions on the topic.
Both authors are partially supported by the Coordena\c{c}\~{a}o de Aperfei\c{c}oamento de Pessoal de N\'ivel Superior-Brasil (CAPES)-Finance Code 001. The first author was partially supported by the MathAmSud project ``ALGEO'' and  CNPq  Grant Number 406377/2021-9.

\section{Open loci of ideals}
We will be using the following notation throughout this section: $k$ is a field, $S=k[x_0,\cdots,x_n]$ is a standard polynomial ring, and $R=S/\fp$ is a graded integral domain of dimension $r$. Let $d$ be an integer and let $M_1,\cdots,M_N$ be all of the monomials of degree $d$ in $S$, explicitly $N=\binomial{n+d}{d}$. For any $\aa=(a_1,\cdots,a_N)\in k^N$, we define $f_{\aa}= \sum_{i=1}^N a_iM_i\in S$.

Given a set of $m$ polynomials of degree $d$ in $S$, we can identify a corresponding point in the affine space $k^{mN}$. Specifically, let $g_1,\ldots,g_m$ be a sequence of homogeneous polynomials of degree $d$ in $S$. For any index $i$, there exists $\aa_i=(a_{i1},\cdots,a_{iN})\in k^N$ such that $g_i=f_{\aa_i}$. Conversely, any point $(\aa_1,\ldots,\aa_m)\in k^{mN}$ determines a sequence of polynomials $f_{\aa_1},\ldots,f_{\aa_m}\in S$ and, consequently, an ideal in $R$.

A locus of ideals is defined as the set of points $(\aa_1,\ldots,\aa_m)\in \mathbb{P}^{mN-1}$ such that the corresponding ideal $(f_{\aa_1},\ldots,f_{\aa_m})\subseteq R$ satisfies a particular property.

To begin, we state a computational algebraic property.

 \begin{Proposition}\label{Grobner} Let $m$ be an integer, $p(z_{1},\ldots,z_{m})\in k[z_{1},\ldots,z_{m}]$ a homogeneous polynomial, $\fb$  a homogeneous ideal of $S$ and keep the other notation as above. Then  $$V_{\fb}^{p}:=\{(\aa_1,\ldots,\aa_m)\in \mathbb{P}^{mN-1}_k:p(f_{\aa_1},\ldots,f_{\aa_m})\in \fb\}$$
 is a closed  algebraic  subset of $\mathbb{P}^{mN-1}_k$. Consequently, $U_{\fb}^{p}:=\{(\aa_1,\ldots,\aa_m)\in \mathbb{P}^{mN-1}_k:p(f_{\aa_1},\ldots,f_{\aa_m})\not\in \fb\}$ is open in $\mathbb{P}^{mN-1}_k$.
 \end{Proposition}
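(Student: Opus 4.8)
The plan is to reduce the statement to a standard fact: a set in projective space defined by the vanishing of finitely many polynomials in the coordinates is Zariski closed. So the core task is to expand $p(f_{\aa_1},\ldots,f_{\aa_m})$ and read off how its membership in $\fb$ translates into polynomial conditions on the $\aa_i$.

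First I would write $p(f_{\aa_1},\ldots,f_{\aa_m})$ explicitly. Since $p$ is homogeneous of some degree $e$, $p(f_{\aa_1},\ldots,f_{\aa_m})$ is a homogeneous polynomial in $S=k[x_0,\ldots,x_n]$ of degree $de$; moreover, when expanded as a polynomial in the monomials of $S$ of degree $de$, each coefficient is a polynomial expression in the entries $a_{ij}$ — in fact a homogeneous polynomial of degree $e$ in the $a_{ij}$ (collectively, treating $\aa_1,\ldots,\aa_m$ jointly), because $p$ is homogeneous of degree $e$ and each $f_{\aa_i}$ is linear in $\aa_i$. Let $M'_1,\ldots,M'_L$ be the monomials of $S$ of degree $de$, and write $p(f_{\aa_1},\ldots,f_{\aa_m})=\sum_{\ell=1}^{L} c_\ell(\aa_1,\ldots,\aa_m)\, M'_\ell$ with each $c_\ell\in k[a_{ij}]$ homogeneous.

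Next I would handle membership in the ideal $\fb$. Let $\fb_{de}$ denote the degree-$de$ graded piece of $\fb$, a finite-dimensional $k$-vector subspace of $S_{de}$, and fix a $k$-basis of a complement $W$ so that $S_{de}=\fb_{de}\oplus W$. Projecting onto $W$ is a fixed $k$-linear map, so the image of $p(f_{\aa_1},\ldots,f_{\aa_m})$ in $W$ has coordinates that are fixed $k$-linear combinations of the $c_\ell(\aa)$, hence again homogeneous polynomials $q_1(\aa),\ldots,q_s(\aa)$ in $k[a_{ij}]$. Then $p(f_{\aa_1},\ldots,f_{\aa_m})\in\fb$ if and only if $q_1(\aa)=\cdots=q_s(\aa)=0$. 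Since each $q_t$ is homogeneous in the $a_{ij}$, the vanishing locus is a well-defined closed subset of $\mathbb{P}^{mN-1}_k$; this is exactly $V_{\fb}^{p}$. The complement $U_{\fb}^{p}$ is then open, as claimed.

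The only point requiring a little care — and the main thing to get right — is the homogeneity bookkeeping: one must verify that the coefficients $c_\ell$, and hence the $q_t$, are genuinely homogeneous polynomials in the combined variables $(a_{ij})$ (so that their zero set descends to projective space), rather than merely polynomial. This follows because substituting $\aa_i\mapsto \lambda\aa_i$ for all $i$ simultaneously scales each $f_{\aa_i}$ by $\lambda$ and hence scales $p(f_{\aa_1},\ldots,f_{\aa_m})$ by $\lambda^{e}$, forcing every coefficient $c_\ell$ to be homogeneous of degree $e$. Everything else is a routine unwinding of definitions, and no deep input is needed beyond the standard correspondence between homogeneous ideals and closed subsets of projective space.
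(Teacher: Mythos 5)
Your argument is correct, and it takes a different route from the paper's. You exploit from the start that $p(f_{\aa_1},\ldots,f_{\aa_m})$ is homogeneous of the fixed degree $de$, so membership in the homogeneous ideal $\fb$ is equivalent to membership in the finite-dimensional graded piece $\fb_{de}\subseteq S_{de}$; composing the coefficient expansion (whose entries $c_\ell(\aa)$ are homogeneous of degree $e$ in the $a_{ij}$, as you verify) with a fixed linear projection of $S_{de}$ with kernel $\fb_{de}$ turns the condition into the vanishing of finitely many homogeneous polynomials $q_t(\aa)$, hence a closed subset of $\mathbb{P}^{mN-1}_k$. The paper instead fixes a monomial order and a Groebner basis of $\fb$ and runs the division algorithm on $p(f_{\aa_1},\ldots,f_{\aa_m})$, tracking that the coefficients of the successive remainders are polynomials in the $a_{ij}$, so that vanishing of the final remainder's coefficients cuts out $V^{p}_{\fb}$; this is in effect a concrete way of computing the same normal-form projection onto a complement of $\fb_{de}$ (spanned by standard monomials), and it yields explicit, computable equations without invoking the graded decomposition. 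Your version is more elementary in that it needs no Groebner theory, only that $\fb_{de}$ is a linear subspace of $S_{de}$, and it makes explicit the homogeneity bookkeeping that justifies working in projective space, a point the paper's proof leaves implicit; what it does not provide is the algorithmic description of the defining equations that the division-algorithm argument gives.
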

 \begin{proof} Fix a monomial order on $x_0,\ldots,x_n$ and consider a Groebner basis for the ideal $\fb$. Let $g=p(f_{\aa_1},\ldots,f_{\aa_m})$. If the remainder,  after doing the division algorithm, is zero then $g\in \fb$.  If the leading term of the polynomial $g$  is not divisible by those of the Groebner basis of $\fb$ then $g\not \in \fb$.  Hence, if the leading coefficient of $g$ is not divisible, we set the corresponding coefficient equal to zero and continue with the rest of the polynomial. Notice that this coefficient is a polynomial in terms of $a_i$'s. We continue dividing by  polynomials in the Groebner basis of  $\fb$. The reminder  is a  polynomial whose coefficients are polynomials in terms of $a_i$'s.  The result follows by induction on the number of terms of $g$. 
 \end{proof}
 \begin{Remark}\label{linearV} The special case where $m=1$ and $p(z)=z$ is of particular interest. In this case the elements of $V_{\fb}^{z}$ define  the vectors of the  $k$-vector space $\fb_{d}\setminus\{0\}$.  Hence $V_{\fb}^{z}$ is a linear subspace of $ \mathbb{P}^{N-1}_k$ of (Krull) dimension HF$_{\fb}(d)-1$ where HF$_{\fb}$ is the Hilbert function of $\fb$.

 \end{Remark}


Next, we study ideals that are traditionally called {\it ideals of the principal class}. That means an ideal whose number of generators is the same as its codimension.
\begin{Theorem} \label{Elimination} For any $1\leq j\leq \dim(R)$, the following set, $\mathcal{C}_j$, is a non-empty proper open subset of  $\mathbb{P}^{jN-1}$:
$$\mathcal{C}_j=\{(\aa_1,\ldots,\aa_j)\in \mathbb{P}^{jN-1}: \codim_R(f_{\aa_1},\ldots,f_{\aa_j})=j\}.$$ 

\end{Theorem}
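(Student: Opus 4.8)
The plan is to prove the two halves — openness and non-emptiness/properness — separately, since they have quite different flavors. For openness, I would proceed by induction on $j$. The key observation is that the locus $\mathcal{C}_j$ can be built up from the locus $\mathcal{C}_{j-1}$ together with an avoidance-of-associated-primes condition. Concretely, if $\codim_R(f_{\aa_1},\ldots,f_{\aa_{j-1}})=j-1$, then $\codim_R(f_{\aa_1},\ldots,f_{\aa_j})=j$ precisely when $f_{\aa_j}$ avoids every minimal prime of $(f_{\aa_1},\ldots,f_{\aa_{j-1}})$ of codimension $j-1$ — equivalently, when $f_{\aa_j}$ is not contained in any such prime. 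The difficulty is that the list of those primes varies with $(\aa_1,\ldots,\aa_{j-1})$, so one cannot directly apply \autoref{Grobner} with a single fixed $\fb$. I would handle this by a standard generic-freeness / Noetherian stratification argument: stratify the base locus $\mathcal{C}_{j-1}$ into finitely many locally closed pieces over which the primary decomposition of the universal ideal $(f_{\aa_1},\ldots,f_{\aa_{j-1}})R[\text{params}]$ behaves uniformly, so that on each stratum the condition ``$f_{\aa_j}\notin \fp$ for all minimal $\fp$ of codimension $j-1$'' becomes an open condition of the type covered by \autoref{Grobner} (a finite intersection of sets $U_{\fb}^{z}$). Taking the union over strata and over the inductive hypothesis that $\mathcal{C}_{j-1}$ is open finishes the openness.

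An alternative, possibly cleaner route for openness: use the semicontinuity of fiber dimension. Consider the incidence variety $\{((\aa_1,\ldots,\aa_j),\fq)\}$ where $\fq$ runs over $\Proj$ or $\Spec$ of $R/(f_{\aa_1},\ldots,f_{\aa_j})$, with its projection to $\mathbb{P}^{jN-1}$; the function $(\aa_1,\ldots,\aa_j)\mapsto \dim R/(f_{\aa_1},\ldots,f_{\aa_j})$ is upper semicontinuous (this is the Noetherian/constructibility of the dimension of fibers of a morphism of finite type), hence $\{\dim R/(f_{\aa_1},\ldots,f_{\aa_j}) \le \dim R - j\}$ is open. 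Since we always have $\codim_R(f_{\aa_1},\ldots,f_{\aa_j})\le j$ for $j$ generators, the set where equality holds is exactly this open set intersected with the complement of the locus where the ideal is the unit ideal in the irrelevant sense — and in fact equality of codimension with $j$ is the same as $\dim R/(f_{\aa_1},\ldots,f_{\aa_j}) = r - j$, which is the open condition. I would present whichever of these two arguments is shorter, leaning on \autoref{Grobner} only if the explicit elimination-theoretic description is needed.

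For non-emptiness, I would exhibit an explicit point. Since $R=S/\fp$ is a graded domain of dimension $r\ge j$, one can choose homogeneous elements $\ell_1,\ldots,\ell_j \in R$ of degree $d$ forming part of a homogeneous system of parameters; the standard trick is to take generic $k$-linear combinations of the degree-$d$ part of a known s.o.p. (using that $k$ is infinite — or passing to a field extension, which does not affect codimension), or simply generic $d$-forms, which form a regular sequence in $R$ by prime avoidance since $R$ is a domain and each partial quotient is again a positively graded domain-or-at-least has all minimal primes of the expected codimension. This shows $\mathcal{C}_j \neq \emptyset$. For properness, I would note that the vanishing locus $\aa_i = 0$ for some $i$, or more robustly the locus where $f_{\aa_1} = 0$ (which is a nonempty proper linear subspace, namely $V^{z}_{\fp}$ of \autoref{Grobner}/\autoref{linearV}), already lies outside $\mathcal{C}_j$: there $\codim_R(f_{\aa_1},\ldots,f_{\aa_j}) = \codim_R(f_{\aa_2},\ldots,f_{\aa_j}) < j$. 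Hence $\mathcal{C}_j$ is a proper subset.

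The main obstacle is the openness step, specifically making rigorous the claim that the ``avoidance of the codimension-$(j-1)$ components'' condition is open despite the components moving with the parameters. The semicontinuity-of-fiber-dimension formulation sidesteps this most gracefully, so I expect the final write-up to be: (1) cite upper semicontinuity of $\dim$ of fibers for the universal family $V(f_{\aa_1},\ldots,f_{\aa_j}) \to \mathbb{P}^{jN-1}$ to get openness; (2) produce a generic regular sequence of $d$-forms for non-emptiness; (3) use $V^z_{\fp}$ for properness.
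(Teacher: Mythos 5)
Your preferred route (the second one) is correct but genuinely different from the paper's. The paper proves openness by pure elimination: for $j=r$ it translates $\codim_R(f_{\aa_1},\ldots,f_{\aa_r})=r$ into surjectivity of the multiplication maps $\tau^m\colon R_{m-d}^{\oplus r}\to R_m$ for some $m$, detects surjectivity by non-vanishing of the maximal minors of the presentation matrices (polynomials in the coefficients $a_{ij}$), and uses Noetherianity of the coefficient ring to replace the infinite family of minor ideals by finitely many polynomials $Q_1,\ldots,Q_v$, so that $\mathcal{C}_r$ is the complement of $V(Q_1,\ldots,Q_v)$; for $j<r$ it evaluates the last $r-j$ blocks of variables of the $Q_i$ at all tuples that are part of a system of parameters, exhibiting $\mathcal{C}_j$ again as the complement of an explicit multi-homogeneous zero locus. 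You instead pass to the incidence variety $Z\subset \mathbb{P}^{jN-1}\times X$ of the universal family and use semicontinuity of fibre dimension, observing that $\codim=j$ is equivalent to $\dim R/(f_{\aa_1},\ldots,f_{\aa_j})=r-j$, the minimal possible value by Krull's height theorem (and $\dim R/I=\dim R-\codim I$ since $R$ is a graded affine domain, hence catenary and equidimensional). This is shorter and more geometric, with one precision you must add: Chevalley's theorem alone gives constructibility of the dimension strata in the base; to conclude that $\{\,y:\dim\pi^{-1}(y)\geq r-j\,\}$ is closed in $\mathbb{P}^{jN-1}$ you need that the projection $\pi\colon Z\to \mathbb{P}^{jN-1}$ is proper, which holds because $X$ is projective. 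Your first route (stratifying so that primary decomposition is uniform) would only give constructibility, not openness, so it should indeed be discarded, as you in effect do. What the paper's heavier argument buys is an explicit ideal of equations for the complement of $\mathcal{C}_j$, which is reused later (e.g.\ the ideal $\fn_r$ in the constructive remark following \autoref{LNn+1}); your argument does not produce equations. On the remaining clauses, note that the paper's own proof does not address non-emptiness or properness at all, so you go beyond it there; just be aware that your prime-avoidance argument needs $k$ infinite (a nonempty open subset of $\mathbb{P}^{jN-1}$ over a finite field can fail to have $k$-points), that $V^{z}_{\fp}$ is empty when $\HF_{\fp}(d)=0$ so for properness with $j\geq 2$ the locus $\aa_1=0$ is the safer witness, and that for $j=1$ and $\fp_d=0$ (e.g.\ $X=\mathbb{P}^n$) the set $\mathcal{C}_1$ is actually all of $\mathbb{P}^{N-1}$, so properness as stated is delicate in that edge case.
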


 \begin{proof} 
 
 For any integer $t$, $S_t$ is the $k$-vector space generated by monomials of degree $t$.  We denote by $R_t$ the $k$-vector space generated by the class of the same monomials of $S_t$ modulo $\fp$ (these classes are not necessarily linearly independent).
 
   Recall that $R=S/\fp$ is a graded integral domain of dimension $r$. Let $\fp$ be generated by $p_1,\ldots,p_q$ of degrees $d_1,\ldots,d_q$ respectively.
    We first prove that $\mathcal{C}_r$ is open.  For any integer $m\geq \max\{d,d_1,\ldots,d_q\}$ and  $(\aa_1,\ldots,\aa_r)\in \mathbb{P}^{rN-1}$,  we define a map 
 \begin{align*}
\tau^m_{(\aa_1,\ldots,\aa_r)}:\underbrace{R_{m-d}\oplus \cdots\oplus  R_{m-d}}_{r}&\longrightarrow R_m\\
(g_1,\ldots ,g_r)&\mapsto (\sum_{j=1}^rg_jf_{\aa_j}).
\end{align*}
For simplicity, we use the notation $\tau^m$ instead of $\tau^m_{(\aa_1,\ldots,\aa_r)}$.

 $\tau^m$ is surjective if and only if $(x_0,\ldots,x_n)^m\subseteq (f_{\aa_1},\ldots,f_{\aa_r})+\fp$. The latter is equivalent to  $\codim_R(f_{\aa_1},\ldots,f_{\aa_r})=r$. 
 We show that the surjectivity of $\tau^m$ is an open property.

 Using the class of the basis of $S_i$'s to generate $R_i$'s, the presentation matrix of $\tau^m$ is a matrix with ${m+n\choose n}$ rows and ${m-d+n\choose n}r$ columns. One can regard the columns in $r$ partitions. The $i$th partition is described by considering the multiplication of the basis of  $R_{m-d}$ into $f_{\aa_i}$. Hence the entries of the columns of the  $i$'th partition are either $a_{ij}$  for some $j$, or zero. 
 
  Let assume $\dim_k(R_m)=N_m$ and let $I_{N_m}(\tau^m)$ be the ideal generated by the $N_m$-minors of  the representation matrix of $\tau^m$.  Then  $\tau^m$ is surjective if and only if $I_{N_m}(\tau^m)\neq 0$.  Consider the polynomial ring $A=k[a_{ij}:1\leq i\leq r,~~ 1\leq j\leq N]$ and $I_{N_m}(\tau^m)$ as an ideal of $A$.  Besides the standard graded structure of $A$, we consider an $\mathbb{N}^r$-graded structure for $A$ as follows: for all $1\leq j\leq N$ and $1\leq i\leq r$,  $$\deg(a_{ij})=(\underbrace{0,\ldots,\underbrace{1}_{i},0\ldots,0}_r).$$ 
  That means the columns of the  $i$'th partition are all of the same multi-degree.
  
  With respect to either grading, $I_{N_m}(\tau^m)$  is a ($\mathbb{N}^r$-)homogeneous ideal of $A$.

  Let $$\mathcal{I}=\sum_{m\geq\max\{d,d_1,\ldots,d_r\}}I_{N_m}(\tau^m).$$
Since $A$ is Noetherian, $\mathcal{I}$ is finitely generated. Let  $\mathcal{I}=(Q_1,\ldots,Q_v)$ for some ($\mathbb{N}^r$-)homogeneous polynomials $Q_i\in A$. We then have $\mathcal{C}_r=(V(Q_1,\ldots,Q_v))^c$ that is an open Zariski set of  $\mathbb{P}^{rN-1}$.

Now, fix  $j<r$.   Consider the polynomial ring $A'=k[a_{st}:1\leq s\leq j,~~ 1\leq t\leq N]$, let $\aa_i$ to be the sequence  of variables $a_{i,1},\ldots,a_{i,N}$ for all $i$, and define
 $$\mathcal{P}_j:=(Q_i(\aa_1,\ldots,\aa_j,\A_{j+1},\ldots,\A_r)\in A': 1\leq i\leq v, \A_t\in \mathbb{P}^{N-1} \text{ and }\{ f_{\A_{j+1}},\ldots,f_{\A_r}\}\in \mathfrak{S})$$
 where $\mathfrak{S}=\{\{f_{\A_{j+1}},\ldots,f_{\A_r}\}: f_{\A_{j+1}},\ldots,f_{\A_r} \text{~~is a part of a system of parameters of~~} R\}$.
We claim  that $\mathcal{C}_j=(V(\mathcal{P}_j))^c\subseteq \mathbb{P}^{jN-1}$.

We first notice that $\mathcal{P}_j$ is a homogeneous ideal of $A'$. This is due to the $\mathbb{N}^r$-graded structure of $A$ and homogeneity of  $I_{N_m}(\tau^m)$,   as explained in the previous paragraph. So that $(V(\mathcal{P}_j))^c\subseteq \mathbb{P}^{jN-1}$.

Since $R$ is a quotient of a polynomial ring, $(\A_1,\ldots,\A_j)\in \mathcal{C}_j$ if and only if   $(f_{\A_1},\ldots,f_{\A_j})$ is a part of a system of parameters of $R$ that is $\dim(R/(f_{\A_1},\ldots,f_{\A_j}))=r-j$. Hence there exists $(\A_{j+1},\ldots,\A_r)\in \mathfrak{S}$  such that  $\codim(f_{\A_1},\ldots,f_{\A_j},\ldots,f_{\A_r})=r$; so that there exists $1\leq i\leq v$  with $Q_i({\A_1},\ldots, {\A_j},\ldots, {\A_r})\neq 0$. The latter says that $\mathcal{C}_j\subseteq (V(\mathcal{P}_j))^c$.  The other inclusion is more visible. 
\end{proof}

 Now, we pass from "codim" to "grade". 
 \begin{Proposition}\label{LG2} Keeping the same notation in this section, the set 
 $$G_{2}:=\{(\aa,\bb)\in \mathbb{P}^{2N-1}_k: (f_{\aa},f_{\bb}) \text {~~is a regular sequence in~~} R\} $$ is a Zariski    open subset of $ \mathbb{P}^{2(N-1)}_k$.  $G_2$ is non-empty if and only if $\depth(R)\geq 2$.

  \end{Proposition}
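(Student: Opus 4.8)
The plan is to split the statement into two parts: the openness of $G_2$ and the characterization of its non-emptiness.

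For openness, the key observation is a standard characterization of when a pair $(f_{\aa}, f_{\bb})$ forms a regular sequence in a domain $R$. Since $R$ is an integral domain, $f_{\aa}$ is automatically a nonzerodivisor as soon as $f_{\aa}\neq 0$; the content of the regular-sequence condition is therefore that $f_{\bb}$ be a nonzerodivisor on $R/(f_{\aa})$, equivalently that $f_{\bb}$ avoid every associated prime of $(f_{\aa})$, equivalently (since $R$ is a domain and $f_{\aa}$ is a nonzero element of a domain, so $(f_{\aa})$ is unmixed of height one when $R$ is Cohen--Macaulay, but in general we must be more careful) that $f_{\bb}\notin \fq$ for any $\fq\in\Ass_R(R/(f_{\aa}))$. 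I would recast this using the colon ideal: $(f_{\aa},f_{\bb})$ is a regular sequence if and only if $f_{\aa}\neq 0$ and the multiplication map $R/(f_{\aa}) \xrightarrow{f_{\bb}} R/(f_{\aa})$ is injective, i.e. $(f_{\aa}):f_{\bb} = (f_{\aa})$. The cleanest route to openness is via the Fitting-ideal / minors argument already used in the proof of \autoref{Elimination}: in each degree $m$, injectivity of the degree-$m$ graded piece of this multiplication map is the non-vanishing of a suitable maximal-minor ideal whose entries are polynomials in the coordinates $(\aa,\bb)$, and the failure of injectivity in \emph{some} degree is a closed condition; assembling these closed conditions over all $m$ and invoking Noetherianity (exactly as in the passage building $\mathcal{I}=\sum_m I_{N_m}(\tau^m)$) shows the locus where $f_{\bb}$ is a zerodivisor mod $(f_{\aa})$ is closed. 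One must also intersect with the open locus $f_{\aa}\neq 0$, which is open by \autoref{Grobner} (or \autoref{linearV}) with $\fb = \fp$ — more precisely, $f_{\aa}\neq 0$ in $R$ means $f_{\aa}\notin\fp$, which is $U_{\fp}^{z}$. Since both $\codim \geq 2$ (given by $\mathcal{C}_2$, which is open by \autoref{Elimination}) is necessary, one could alternatively write $G_2 = \mathcal{C}_2 \cap \{(\aa,\bb): \text{regular sequence}\}$, but in a domain the codimension-$2$ condition and the regular-sequence condition nearly coincide; I would be careful here since $R$ need not be Cohen--Macaulay, so grade and codimension can differ, and it is the grade (depth-sensitive) condition that governs $G_2$.

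For the non-emptiness characterization: if $G_2\neq\emptyset$ then $R$ contains a regular sequence of length $2$ consisting of elements of degree $d$, hence $\depth(R)\geq 2$ trivially. Conversely, suppose $\depth(R)\geq 2$. Then the irrelevant maximal ideal $\fm$ of $R$ has depth $\geq 2$, so $\fm$ is not contained in any associated prime of $R$ and, by prime avoidance in the graded setting, there is a homogeneous nonzerodivisor; iterating, $R$ admits a homogeneous regular sequence $y_1, y_2$. The remaining point is to arrange the degrees to both be $d$: given a homogeneous nonzerodivisor $y$ of degree $e$, a general element of $R_d$ is again a nonzerodivisor — because the zerodivisors of $R$ form the union of finitely many associated primes $\fq_1,\dots,\fq_s$, none containing $\fm$, hence $(\fq_i)_d \subsetneq R_d$ is a proper subspace, and over an infinite field $k$ a finite union of proper subspaces cannot cover $R_d$ (note $R_d\neq 0$ since $\depth R\geq 2$ forces $\dim R\geq 2$ and $R$ standard graded). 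This produces $f_{\aa}\in R_d$ a nonzerodivisor; then applying the same genericity argument to $R/(f_{\aa})$ — whose associated primes again avoid $\fm$ precisely because $\depth(R/(f_{\aa}))\geq 1$ — yields $f_{\bb}\in R_d$ a nonzerodivisor mod $(f_{\aa})$, so $(f_{\aa},f_{\bb})\in G_2$.

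The main obstacle I anticipate is bookkeeping the difference between grade and codimension when $R$ is not Cohen--Macaulay: the minors-of-$\tau^m$ argument from \autoref{Elimination} detects codimension via surjectivity of multiplication maps into high-degree pieces, whereas regular sequences are about \emph{injectivity} and depth. I would therefore not try to reuse \autoref{Elimination} as a black box, but rather rerun its mechanism with the injectivity map $R/(f_{\aa})\xrightarrow{f_{\bb}}R/(f_{\aa})$ in place of $\tau^m$, taking care that "failure of regular sequence" really is a closed condition: it is the union of $\{f_{\aa}=0\}$ (closed) with $\{f_{\bb}\text{ is a zerodivisor mod }(f_{\aa})\}$, and the latter is closed because in some finite degree the graded multiplication map drops rank, and rank-dropping is cut out by vanishing of minors with polynomial entries in $(\aa,\bb)$ — the subtlety being that the relevant degree is not uniform, so one again needs the Noetherian ascending-chain packaging used for $\mathcal{I}$. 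Finally, the ambient space in the statement is written as $\mathbb{P}^{2(N-1)}_k$ in one place and $\mathbb{P}^{2N-1}_k$ in another; I would standardize to $\mathbb{P}^{2N-1}_k$, since the parameter space of a pair of degree-$d$ forms is $k^{2N}$ and its projectivization has dimension $2N-1$.
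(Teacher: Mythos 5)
Your reduction of the problem to closedness of the bad locus is reasonable, but the mechanism you propose for that closedness has a genuine gap. The condition ``$f_{\bb}$ is a zerodivisor on $R/(f_{\aa})$'' is an \emph{existential} condition over infinitely many degrees: it says that for \emph{some} $m$ the graded multiplication map drops rank, i.e.\ the point lies in $\bigcup_m V(J_m)$ where $J_m$ is the appropriate ideal of maximal minors. A countable union of closed sets need not be closed, and the Noetherian packaging you invoke from \autoref{Elimination} does not repair this: summing the ideals gives $V\bigl(\sum_m J_m\bigr)=\bigcap_m V(J_m)$, the locus where the map drops rank in \emph{every} degree, which is the wrong Boolean combination. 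In \autoref{Elimination} the logic is dual and harmless: the good locus (surjectivity of $\tau^m$ for some, equivalently all large, $m$) is a union of open conditions, hence automatically open. Here the good locus is an intersection of infinitely many open conditions, so openness requires a uniformity or finiteness input that your argument does not supply (you would need, e.g., a degree bound on a witness $g$ with $gf_{\bb}\in(f_{\aa})$, $g\notin(f_{\aa})$, uniform in $(\aa,\bb)$). The paper supplies exactly such a finiteness statement by a global argument: since $R$ is a homomorphic image of a Gorenstein ring, its non-$(S_2)$-locus is closed of codimension at least $2$, whence the set $\mathcal{A}$ of primes $\fq$ with $\codim(\fq)=2$ and $\grade(\fq)=1$ is finite; inside $\mathcal{C}_2$ the failure of grade $2$ is equivalent to $f_{\aa},f_{\bb}\in\fq$ for one of these finitely many $\fq$, and by \autoref{Grobner} (in the linear case of \autoref{linearV}) this is the finite union of closed sets $V^{z}_{\fp+\fq}\times V^{z}_{\fp+\fq}$, so $G_2=\mathcal{C}_2\setminus\bigcup_{\fq\in\mathcal{A}}\bigl(V^{z}_{\fp+\fq}\times V^{z}_{\fp+\fq}\bigr)$ is open. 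Without an input of this kind your degree-by-degree minors argument does not close.

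Your treatment of the non-emptiness equivalence is fine (and is a point the paper leaves implicit): a homogeneous regular sequence of length $2$ gives $\depth(R)\geq 2$, and conversely, since $R$ is a domain any nonzero $f_{\aa}\in R_d$ is a nonzerodivisor, and if $\depth(R)\geq 2$ then $\fm\notin\Ass(R/(f_{\aa}))$, so no associated prime contains all of $R_d$ (a prime containing $R_d$ contains $\fm$), and over the infinite field $k$ a finite union of proper subspaces cannot cover $R_d$; this produces $f_{\bb}$. Also, your normalization of the ambient space to $\mathbb{P}^{2N-1}_k$ is correct; the exponent $2(N-1)$ in the statement is a typo.
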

 \begin{proof}
 It is known that for a ring such as $R$ (homomorphic image of Gorenstein ring), the $(S_2)$-locus of $\Spec(R)$ is an open subset, c.f. \cite[12.3.10]{BS}.  Let $V(I)$ be the non-$(S_2)$-locus of $R$ then   $I$ is a homogeneous ideal of $R$ of codimension at least $2$.
 
   Let  $\mathcal{A}=\{\fq\in \Spec(R):\codim(\fq)=2 \text{ and} \grade(\fq)=1\}$. $\mathcal{A}$ may be empty.  If not,   for any $\fq\in \mathcal{A}$, $R_{\fq}$ is not $S_2$, hence $\fq\supset I$, and since $\codim(I)\geq 2$, $\fq$ is a minimal prime over $I$.  
   Thence $\mathcal{A}$ is a finite set (or empty). 
 
 Recalling the definition of $\mathcal{C}_2$ from  \autoref{Elimination}, 
 $$G_2=\mathcal{C}_2\setminus\{(\aa,\bb)\in \mathbb{P}^{2N-1}_k: \codim(f_{\aa},f_{\bb})=2 \text{~and~} \grade(f_{\aa},f_{\bb})=1\}.$$
 
 We show that the take-out part is closed. 
 \begin{align*}
 \{(\aa,\bb)\in \mathbb{P}^{2N-1}_k: \codim(f_{\aa},f_{\bb})=2 \text{~and~} \grade(f_{\aa},f_{\bb})=1\}&=\\
 \{(\aa,\bb)\in \mathbb{P}^{2N-1}_k: \codim(f_{\aa},f_{\bb})=2 ~\text{and} ~ f_{\aa},f_{\bb}\in \fq~ \text{~for some~} \fq\in \mathcal{A}\}.
 \end{align*}
 By  \autoref{Grobner}, $f_{\aa},f_{\bb}\in \fq$ if and only if $(\aa,\bb)\in V^{z}_{\fp+\fq}\times V^{z}_{\fp+\fq}$ where $\fp$ is the defining ideal of $R$. Therefore 
 $$G_2=\mathcal{C}_2\setminus \bigcup_{\fq\in \mathcal{A}}(V^{z}_{\fp+\fq}\times V^{z}_{\fp+\fq}).$$
This completes the proof since $\mathcal{A}$ is a finite set and $\mathcal{C}_2$ is open by  \autoref{Elimination}.

  \end{proof}
 Initially, it might appear enticing to establish both \autoref{Elimination} and \autoref{LG2} through  ``prime avoidance lemma". However, it's worth noting that the prime avoidance lemma constructs the sought-after sequences in an inductive manner. On the contrary, the above mentioned results construct these desired sequences simultaneously.

Should we opt for an inductive approach, it would necessitate a considerable endeavor to demonstrate the openness of the assembled sets.
 
The next locus we discuss is about {\it analytic spread}. We present more details about its definition.
Let $(A,\fm)$ denote a standard graded Noetherian algebra over the field $k$ and   $I=(f_1,\ldots,f_s)$ stand for a graded ideal of $A$. Set $\mathcal{R}_A(I):=A[It]\subset A[t]$, the Rees algebra of $I$ over $A$.  The special fiber of $I$ is the algebra $\mathcal{F}(I)=\mathcal{R}_A(I)/\fm \mathcal{R}_A(I)$. The Krull dimension of $\mathcal{F}(I)$ is called the {\em analytic spread} of $I$. When $I$ is generated in a single degree, $\mathcal{F}(I)\simeq k[f_1,\ldots,f_s]$.   We say that  $I$ has maximum  analytic spread  if $\ell_A(I)=\min\{\Dim(A),s\}$. For more details on this notation see \cite{HunekeSwanson}. The maximum analytic spread is closely related to the  Noether normalization theorem. 
 \begin{Theorem}\label{LNn+1}
  Let $k$ be an infinite field. The following set that   we call  the {\it  Noether Normalization locus}  is  a non-empty open subset of  $\mathbb{P}^{(n+1)N-1}$
  $$\mathcal{N}_{n+1}=\{(\aa_0,\ldots,\aa_n)\in \mathbb{P}^{(n+1)N-1}:\ell_R(f_{\aa_0},\cdots,f_{\aa_n})=r\}.$$
   \end{Theorem}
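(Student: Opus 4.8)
The plan is to recast the condition $\ell_R(f_{\aa_0},\dots,f_{\aa_n})=r$ as algebraic independence of $r$ of the forms $f_{\aa_i}$, to reduce the openness of $\mathcal{N}_{n+1}$ to that of an auxiliary locus in an affine space, and to establish openness of that locus by a Noetherianity argument. As recalled above, since $I=(f_{\aa_0},\dots,f_{\aa_n})$ is generated in the single degree $d$ one has $\mathcal{F}(I)\simeq k[f_{\aa_0},\dots,f_{\aa_n}]$, the $k$-subalgebra of $R$ generated by the $f_{\aa_i}$, so that $\ell_R(f_{\aa_0},\dots,f_{\aa_n})=\dim k[f_{\aa_0},\dots,f_{\aa_n}]=\operatorname{trdeg}_k k(f_{\aa_0},\dots,f_{\aa_n})$. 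Since a transcendence basis of a field can always be extracted from a generating set, this number is the largest size of an algebraically independent subset of $\{f_{\aa_0},\dots,f_{\aa_n}\}$, and it is $\le r$ because $k(f_{\aa_0},\dots,f_{\aa_n})\subseteq\operatorname{Frac}(R)$. Hence $(\aa_0,\dots,\aa_n)\in\mathcal{N}_{n+1}$ exactly when some $r$ of the $f_{\aa_i}$ are algebraically independent over $k$. Let $\mathcal{U}\subseteq\mathbb{A}^{rN}$ be the set of $(\bb_1,\dots,\bb_r)$ with $f_{\bb_1},\dots,f_{\bb_r}$ algebraically independent over $k$, and for each size-$r$ subset $S\subseteq\{0,\dots,n\}$ let $\pi_S\colon\mathbb{A}^{(n+1)N}\to\mathbb{A}^{rN}$ be the projection onto the coordinate blocks indexed by $S$. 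Then the preimage of $\mathcal{N}_{n+1}$ under $\mathbb{A}^{(n+1)N}\setminus\{0\}\to\mathbb{P}^{(n+1)N-1}$ is $\bigcup_{|S|=r}\pi_S^{-1}(\mathcal{U})$, which is invariant under simultaneous rescaling of the coordinates; so, once $\mathcal{U}$ is shown open, $\mathcal{N}_{n+1}$ is open. (If $r=0$ then $R=k$ and $\mathcal{N}_{n+1}=\mathbb{P}^{(n+1)N-1}$, already open and non-empty.)

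To show $\mathcal{U}$ is open, for each $m\ge1$ let $O_m\subseteq\mathbb{A}^{rN}$ be the set of $(\bb)$ for which the $\binom{m+r-1}{r-1}$ products $f_{\bb_1}^{\alpha_1}\cdots f_{\bb_r}^{\alpha_r}$ with $\alpha_1+\dots+\alpha_r=m$ are $k$-linearly independent in the finite-dimensional vector space $R_{md}$. Expanding these products in a fixed $k$-basis of $R_{md}$ gives a matrix whose entries are polynomials in $\bb$, and linear independence is the non-vanishing of one of its maximal minors; so each $O_m$ is Zariski open. Because the $f_{\bb_i}$ are homogeneous of the same degree, any algebraic relation among them splits into homogeneous relations, and a homogeneous relation of degree $m$ is precisely a $k$-linear dependence among the products above; therefore $\mathcal{U}=\bigcap_{m\ge1}O_m$. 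The decisive observation is that this family is nested downward: if $P\in k[z_1,\dots,z_r]$ is a non-zero homogeneous relation of degree $m$, then $z_1P$ is a non-zero homogeneous relation of degree $m+1$, so $O_{m+1}\subseteq O_m$. Consequently the complements form an ascending chain $O_1^c\subseteq O_2^c\subseteq\cdots$ of closed subsets of the Noetherian space $\mathbb{A}^{rN}$, which stabilizes at some $m_0$; hence $\mathcal{U}=\bigcap_{m\ge1}O_m=O_{m_0}$ is open.

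For non-emptiness (assume $r\ge1$): since $k$ is infinite, the graded Noether Normalization Lemma provides linear forms $y_1,\dots,y_r\in R_1$ that are algebraically independent over $k$. Then $y_1^d,\dots,y_r^d$ are again algebraically independent, and each is a form of degree $d$, hence equals $f_{\aa_i}$ for a suitable $\aa_i\in k^N\setminus\{0\}$; the tuple with $(f_{\aa_0},\dots,f_{\aa_{r-1}},f_{\aa_r},\dots,f_{\aa_n})=(y_1^d,\dots,y_r^d,0,\dots,0)$ then lies in $\mathcal{N}_{n+1}$.

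The step I expect to be the real obstacle is the openness of $\mathcal{U}$: algebraic independence of $f_{\bb_1},\dots,f_{\bb_r}$ is, a priori, an infinite conjunction of open conditions, one in each degree, and an infinite intersection of open sets is generally not open. What makes the argument go through is exactly the nesting $O_{m+1}\subseteq O_m$, which converts the question into stabilization of an ascending chain of closed sets and thereby collapses the infinite intersection to a single $O_{m_0}$. The remaining ingredients — the identification of $\ell_R$ with a transcendence degree for equigenerated ideals, the descent from $\mathbb{A}^{(n+1)N}$ to $\mathbb{P}^{(n+1)N-1}$, and the Noether normalization input for non-emptiness — are routine.
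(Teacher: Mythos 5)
Most of your reductions are sound: the identification $\ell_R(f_{\aa_0},\ldots,f_{\aa_n})=\operatorname{trdeg}_k k(f_{\aa_0},\ldots,f_{\aa_n})$ for ideals generated in a single degree, the passage to $r$-element subsets and the descent from the affine cone to $\mathbb{P}^{(n+1)N-1}$, the description $\mathcal{U}=\bigcap_{m\geq 1}O_m$ with each $O_m$ open, the nesting $O_{m+1}\subseteq O_m$, and the non-emptiness via graded Noether normalization are all correct. The gap is exactly at the step you yourself call decisive. A Noetherian topological space satisfies the \emph{descending} chain condition on closed subsets (equivalently, the ascending chain condition on \emph{open} subsets); your chain $O_1^c\subseteq O_2^c\subseteq\cdots$ is an \emph{ascending} chain of closed sets, i.e.\ a descending chain $O_1\supseteq O_2\supseteq\cdots$ of opens, and such chains need not stabilize (in $\mathbb{A}^1_k$ with $k$ infinite, take $O_m=\mathbb{A}^1\setminus\{c_1,\ldots,c_m\}$ for distinct $c_i$), nor is the intersection of a descending chain of opens open in general. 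So the openness of $\mathcal{U}$ is not established. What your argument actually requires is a degree bound $m_0$ \emph{uniform in} $\bb$ such that algebraically dependent forms $f_{\bb_1},\ldots,f_{\bb_r}$ always admit a relation of degree at most $m_0$; that is precisely the assertion $\bigcup_m O_m^c=O_{m_0}^c$, it does not follow from Noetherianity of $\mathbb{A}^{rN}$, and proving it (say via generic flatness and Noetherian induction on the parameter space) is a genuine additional piece of work.

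The paper sidesteps the infinite intersection altogether by trading algebraic independence for codimension. Since $R$ is a graded domain of dimension $r$, an $r$-tuple of degree-$d$ forms is algebraically independent over $k$ if and only if it generates an ideal of codimension $r$: going-down for the integral extension $k[\theta_1,\ldots,\theta_r]\subseteq R$ gives one direction, and analytic independence of systems of parameters gives the other; this is the equality $\mathcal{N}_r=\mathcal{C}_r$ in the paper's proof. The point is that ``codimension $r$'' is an existential condition --- surjectivity of the multiplication map $\tau^m$ for a \emph{single} $m$ --- so the corresponding locus is a union over $m$ of open determinantal conditions, hence open with no chain condition on closed sets needed; the Noetherianity invoked in \autoref{Elimination} is the ascending chain condition on ideals of the coefficient ring, a chain running in the legitimate direction. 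Your condition ``no relation in any degree'' is instead universal in $m$, which is the wrong quantifier for openness. If you replace your $\mathcal{U}$ by the locus $\{(\bb_1,\ldots,\bb_r):\codim_R(f_{\bb_1},\ldots,f_{\bb_r})=r\}$ (justifying the equivalence as above) and keep your subset-projection device, the proof closes up and, incidentally, uses the infinite field only for non-emptiness, whereas the paper's inductive linear-combination argument uses it in the openness step as well.
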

   
\begin{proof}   We prove by induction on $i\geq r$ that $\mathcal{N}_i$ is a Zariski open where
$$\mathcal{N}_i=\{(\aa_0,\ldots,\aa_{i-1})\in \mathbb{P}^{iN-1}:\ell_R(f_{\aa_0},\cdots,f_{\aa_{i-1}})=r\}.$$  

First,  we verify  the case $i=r$. Let $\{\theta_1,\ldots,\theta_r\}\subset R_d$ such that $\Dim(k[\theta_1,\ldots,\theta_r])=r$. Since $k[\theta_1,\ldots,\theta_r]$ is integrally closed and the extension $k[\theta_1,\ldots,\theta_r]\subset R$ is integral, the going-down theorem holds for this extension, in particular, the ideal $(\theta_1,\ldots,\theta_r)R$ has codimension $r$ i.e it is a system of parameters. On the other hand, any system of parameters is analytically independent \cite[Theorem 14.5]{M86}. Therefore 
$$\mathcal{N}_r=\{(\aa_0,\ldots,\aa_{r-1})\in k^{rN} : \codim_R(f_{\aa_0},\cdots,f_{\aa_{r-1}})=r\}=\mathcal{C}_r.$$
The latter determines an open subset of $\mathbb{P}^{rN-1}$ by  \autoref{Elimination}. 

Next, we show that $\mathcal{N}_{r+1}$ is open Zariski. The proof of the general inductive pass is similar.

Let $\aa_1,\ldots,\aa_{r+1}\in k^{N}$ be such that $\ell_R(f_{\aa_1},\ldots,f_{\aa_{r+1}})=r$. Then using the same technique as that of the proof of the  Noether normalization theorem \cite[Lemma 2, page 262]{M86}, one can show that there exists $(c_1,\ldots,c_{r})\in k^r$ such that $\ell_R(f_{\aa_1}-c_1f_{\aa_{r+1}},\ldots,f_{\aa_r}-c_rf_{\aa_{r+1}})=r$--(here we need an infinite field).  We repeat this statement in other words: we denote the points in $k^{(r+1)N},$ $k^{rN}$ and $k^{r(r+1)}$ by matrices of sizes $(r+1)\times N$, $r\times N$ and $r\times (r+1)$, moreover,  $M=(m_1,\ldots,m_N)^t$ the $N\times 1$ matrix of the monomials of degree $d$ in $R$. Then for any $(r+1)\times N$-matrix $A=(\aa_{ij})$ with $\ell_R(AM)=r$, there exists an $r\times {(r+1)}$-matrix $C$ such that $CA\in \mathcal{N}_r$. 

Conversely, if for an $(r+1)\times N$-matrix $A=(\aa_{ij})$  there exists an $r\times {(r+1)}$-matrix $C$ such that $CA\in \mathcal{N}_r$. Then $r=\ell(CAM)\leq \ell(AM)\leq \ell(M)=r$. So that the rows of $A$ constitute  points of $k^{(r+1)N}$ that belong to $\mathcal{N}_{r+1}$.
In brief 
\begin{equation}\label{nr+1}
\mathcal{N}_{r+1}=\{A\in  k^{(r+1)N}: \text{~~there exists~~}  C\in k^{r(r+1)} \text{~~with~~} CA\in \mathcal{N}_r\}.
\end{equation}
Restating the above equality, any $r\times(r+1)$ matrix $C$ determines a linear map $\varPhi_C:k^{r+1}\to k^r$.  $N$ copies such a map define a linear map
 $$\bigoplus^N\varPhi_C:k^{(r+1)N}\to k^{rN}.$$ 
The equality (\autoref{nr+1}) states that
$$\mathcal{N}_{r+1}=\bigcup_C (\bigoplus^N\varPhi_C)^{-1}(\mathcal{N}_r).$$ 
The linearity of $\bigoplus^N\varPhi_C$ guarantees that $\mathcal{N}_{r+1}$ is an open Zariski subset of $\mathbb{P}^{(r+1)N-1}$.
The inductive pass of the induction argument  follows by showing that 
$$\mathcal{N}_{r+j}=\bigcup_C (\bigoplus^N\varPhi_C)^{-1}(\mathcal{N}_{r+j-1}).$$ 
 \end{proof}
 
In the following remark, we provide an alternative, constructive proof for   \autoref{LNn+1}.
\begin{Remark}
Consider three sets of independent variables
$$ \begin{pmatrix} 
  Z_{1,1}&\ldots&\ldots&Z_{1,N}\\
  \cdots&\cdots&\cdots&\cdots\\
  Z_{r,1}&\ldots&\ldots&Z_{r,N}
 \end{pmatrix},
 \quad
 \begin{pmatrix} 
  X_{1,1}&\ldots&\ldots&X_{1,N}\\
  \cdots&\cdots&\cdots&\cdots\\
  X_{r,1}&\ldots&\ldots&X_{r,N}\\
  X_{r+1,1}&\ldots&\ldots&X_{r+1,N}
 \end{pmatrix},
 \quad
 \begin{pmatrix} 
  Y_{1,1}&\ldots&Y_{1,r+1}\\
  \cdots&\cdots&\cdots\\
  Y_{r,1}&\ldots&Y_{r,r+1}
 \end{pmatrix},
 $$
  and  set $\ZZ_i=(Z_{i,1},\ldots, Z_{i,N})$, $\XX^i=(X_{i,1},\ldots,X_{r+1,i})^t$ and $\YY_i=(Y_{i,1},\ldots,\YY_{i,r+1})$.
   Assume that $\mathbb{P}^{rN-1}\setminus \mathcal{N}_r=V(\fn_r)$ where $\fn_r\subset k[\ZZ_1,\ldots,\ZZ_r]$ is a homogeneous ideal.
   For any $F\in \fn_r$, set 
   $$g_F(\YY_1,\ldots,\YY_r,\XX^1,\ldots,\XX^N)=F((\YY_1\XX^1,\ldots,\YY_1\XX^N),\ldots,(\YY_r\XX^1,\ldots,\YY_r\XX^N))).$$
 Let $\fn=(g_F: F \in  \fn_r)$ be the homogeneous ideal in  $k[\YY_1,\ldots,\YY_r,\XX^1,\ldots,\XX^N]$.
 The projection $\pi:\Proj(k[\YY_1,\ldots,\YY_r,\XX^1,\ldots,\XX^N])\to \Proj(k[\XX^1,\ldots,\XX^N])$ is a closed map. Hence, there is a homogeneous ideal $\fn_{r+1}\subset k[\XX^1,\ldots,\XX^N]$ with $V(\fn_{r+1})=\pi(V(\fn))$. 
 
 The explanation of $\mathcal{N}_{r+1}$ in (\autoref{nr+1}) shows that  $\mathbb{P}^{(r+1)N-1}\setminus \mathcal{N}_{r+1}=V(\fn_{r+1}),$ as desired .

\end{Remark}
 
 \section{Application to Birational Maps}
  We fix a field $k$, of any characteristic and not necessarily algebraically closed. Let $X\subseteq \mathbb{P}^n_k$  be a reduced and irreducible projective variety.   We assume $X$ is  non-degenerated i.e. $X$ is not contained in any hyperplane, and denote  the coordinate ring of $X$ by $R$. I.e. $R:=k[x_0,\ldots,x_n]/\mathcal{I}(X)$.
 
 \subsection{Clear Degree}
 
 A rational map  $h$ from $X$ to $X$ is presented  by $$h:(x_0:\cdots:x_n)\mapsto (h_0(x_0:\cdots:x_n):\cdots:h_n(x_0:\cdots:x_n)).$$
 Here $h_0,\cdots, h_n$ belong to $R$ and  all of  them are of the  same degree. One faces preliminarily the fact that the map can be defined by different sets of forms of fixed degree, where this degree may vary from set to set. 
  One way to define the degree of a birational map is to consider the minimum possible degree among all of the representative of the map. This is the natural way, that coincide with the usual way of definition for the case where $X=\mathbb{P}^n$. However, in the case of arbitrary variety $X$, such as $X_2$  in the  example below,  it is possible that a map admits two or more representative of minimal degree. Non-uniqueness of the representative  causes difficulties to study the locus of birational maps.

   We present two examples:
 \begin{Example} \label{Ex1}  Consider  rational maps $
\sigma_i:X_i\dasharrow X_i$, $(x:y:z)\mapsto (yz:xz:xy),$ 
in the  cases where  $X_1=V(y^2-xz)$, a smooth curve in $\mathbb{P}^2$, and 
$X_2=V(y^3-x^2z)$ a cuspidal curve  in $\mathbb{P}^2$.
 $\sigma_1$  is also represented by $(z:y:x)$; while $\sigma_2$ admits another representative of degree $2$ that is not obtained by multiplication, namely $\sigma_2$ is the same as the map 
 \begin{align*}
\tau:X_2&\dasharrow X_2\\
(x:y:z)&\mapsto (xz:y^2:x^2).
\end{align*} 
 \end{Example}

 A strategy to circumvent the complexities associated with defining  degree of a birational map is to consider the grade of the ideal of the base of the map. Recall that for an ideal $I$ in a ring $R$, $\grade(I,R)$ is the maximum length of a proper regular sequence inside $I$. For the map $h$ defined earlier, let $I=(h_0,\ldots,h_n)\subset R$. If $\grade(I,R)\geq 2$, then the map $h$ is presented uniquely up to multiplication by an element of $R$ (rather than an element of  $Q(R)$), as shown in \cite[Proposition 1.1]{Si04}. Based on this, we present the following definition:
 
 \begin{Definition}\label{Ddegree} Let $h:X\dasharrow X$ be a rational map.  We say the map $h$ has {\it clear polynomial  degree} $d$, if $h$ admits a representative  $(h_0:\cdots:h_n)$ such that  the ideal in $R$ generated by $h_0,\ldots,h_n$ has grade at least $2$ and  $\deg(h_i)=d$.
  \end{Definition}
  One says  $X$ satisfies the Serre's $(S_2)$-condition if  for all prime ideal $\fp\subset R$, 
$\depth(R_{\fp})\geq \min\{\codim(\fp),2\}.$ In the case where $X$ is $S_2$ or at least $S_2$ in codimension $2$, then   $\grade(I,R)\geq 2$ if and only if $\codim(I)\geq 2$.  In particular, if $R$ is factorial, then $\grade(I,R)\geq 2$ if and only if $h_0,\ldots,h_n$ have no common factor.

 

   \begin{Definition}\label{DBird} We denote by $\Bir(X)_d$ the set of birational maps from $X$ to $X$  of clear polynomial degree $d$.  $\Bir(X)_{\ast}:=\cup_{d=1}^{\infty}\Bir(X)_d$.   If, for example, $R$ is factorial, then $\Bir(X)_{\ast}=\Bir(X)$. 
\end{Definition}
In  \autoref{Ex1}, $\sigma_1\in \Bir(X_1)_1\setminus \Bir(X_1)_2 $ while $\sigma_2\in \Bir(X_2)\setminus \Bir(X_2)_*$.

\begin{Remark}  Once we study the entire group $\Bir(X)$, it is possible and useful to replace $X$ with a smooth variety, using the resolution of singularities, or in some cases even replace $\Bir(X)$ with $\mathrm{Aut}(X)$, using the minimal model. However, such a replacement is not as functional in the study of $\Bir(X)_{d}$. For example, the varieties $X_1$ and $X_2$ in  \autoref{Ex1} are birationally equivalent. The maps $\theta:X_2\dasharrow X_1$ that sends $(x:y:z)$ to $(yz:xz:y^2)$ and $\theta^{-1}:X_1\dasharrow X_2$ that sends $(x:y:z)$ to $(yz:xz:x^2)$ define the birational equivalence. Using these birational maps, we have an isomorphism $\Psi:\Bir(X_1)\to \Bir(X_2)$ given by $\Psi(\phi)=\theta^{-1}\circ\phi\circ\theta\in \Bir(X_2)$. Interestingly, in this example, $\Psi(\sigma_1)=\sigma_2$. Thus the restriction of $\Psi$ to $\Bir(X_1)_*$ does not define a well-defined map to $\Bir(X_2)_*$.
\end{Remark}
A natural question is the following
\begin{Question}
For which variety $X$ does $\Bir(X)_*$ admit the structure of a group?
\end{Question}
An immediate answer to this question is the varieties $X$ whose coordinate ring $R$ is a unique factorization domain, since $\Bir(X)_*=\Bir(X)$. 

A remarkable class of varieties to be studied is the class of projective smooth complete intersection $F$:    when $\Dim(F)\geq 3$ the coordinate ring of $F$ is a factorial ring due to the theorem of Grothendieck c.f. \cite{Call94};  when the  sum of  degrees is at most $n$ these varieties are Fano varieties. Recall that for a Fano variety $F$, $\Bir(F)$ is not  a finite set, unlike varieties of general type. 

When $X$ is a curve,  $\Bir(X)_*$ coincides with linear automorphism over $X$. Hence it is a group. We prove the latter in the following proposition.
\begin{Proposition}\label{X curve} Let $X$ be an irreducible curve. The following statements are equivalent
\begin{itemize}
\item{$R=k[x_{0},\ldots,x_{n}]/\mathcal{I}(X)$ is a Cohen-Macaulay ring (of Krull dimension $2$).}
\item{$\Bir(X)_*\neq \emptyset$.}
\item{$\Bir(X)_*$ is the same as linear automorphisms of $X$.  }
\end{itemize}
\end{Proposition}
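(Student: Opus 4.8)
The plan is to establish the three equivalences by a cycle of implications, using the fact that for a curve $X\subseteq\mathbb{P}^n$ the coordinate ring $R$ has Krull dimension $2$, so that ``$\grade(I,R)\ge 2$'' is the strongest possible grade condition and forces $I$ to be $\fm$-primary (where $\fm$ is the irrelevant maximal ideal). First I would record this basic observation: if $h=(h_0:\cdots:h_n)$ is a representative of clear polynomial degree $d$, then $I=(h_0,\ldots,h_n)$ has $\grade(I,R)\ge 2=\dim R$, hence $\codim(I)=2$ and $R/I$ has dimension $0$; in particular $R$ must itself have depth $\ge 2$, i.e. $R$ is Cohen--Macaulay. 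This already gives $\Bir(X)_*\neq\emptyset \Rightarrow R$ is Cohen--Macaulay, and conversely this is exactly the hypothesis under which \autoref{LG2} produces a nonempty $G_2$, so that the identity map (or some linear automorphism) lies in $\Bir(X)_1$, giving $R$ Cohen--Macaulay $\Rightarrow \Bir(X)_*\neq\emptyset$. The remaining content is the implication $\Bir(X)_*\neq\emptyset \Rightarrow$ every element of $\Bir(X)_*$ is a linear automorphism, together with the (routine) converse that linear automorphisms always have clear degree $1$.

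The heart of the argument is therefore: a birational self-map $h$ of a curve $X$ with a representative $(h_0:\cdots:h_n)$ of clear degree $d$ must have $d=1$, and the associated map must be an honest (everywhere-defined) automorphism induced by a linear change of coordinates. I would argue as follows. Since $\grade(I,R)\ge 2$ on a two-dimensional ring, $I$ is $\fm$-primary, so $h$ has \emph{empty} base locus as a map of the projective curve $X$: it is a genuine morphism $X\to X$. A surjective morphism of an irreducible projective curve to itself that is moreover birational is an isomorphism (finite birational onto a normal... but $X$ need not be normal — instead use that a birational morphism between irreducible projective curves over a field which is generically injective is an isomorphism onto its image because it is proper, injective on a dense open, hence... ) — here I must be slightly careful and will instead use the clean statement: a birational projective morphism $X\to Y$ with $Y$ a curve is an isomorphism if and only if it does not contract any curve, which is automatic in dimension one for degree reasons; more robustly, $h$ and $h^{-1}$ are both morphisms (apply the base-locus argument to the inverse, which also lies in $\Bir(X)_*$ once we know $\Bir(X)_*$ is closed under inverse — but that is part of what must be shown), so $h$ is an isomorphism. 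Once $h$ is an automorphism of $X$, the polynomials $h_0,\ldots,h_n$ define a closed immersion $X\hookrightarrow\mathbb{P}^n$ via the linear system $|d\cdot\mathcal{O}_X(1)|$ restricted to the span of the $h_i$; comparing with the original embedding and using non-degeneracy, $d=1$ and the $h_i$ are linear forms, so $h$ extends to a linear automorphism of $\mathbb{P}^n$ preserving $X$.

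I expect the main obstacle to be the step asserting that $h$ being base-point-free (as a rational map of the \emph{possibly singular} curve $X$) already forces it to be induced by a morphism, and then that this morphism is an isomorphism, without assuming normality. The subtlety is that $\Proj R$ is the curve $X$ itself, and the condition $\grade(I,R)\ge 2$ says $\widetilde I=\mathcal{O}_X$ as sheaves, so the $h_i$ generate a line bundle and $h$ is a morphism $X\to\mathbb{P}^n$ with image $X$; birationality then makes it finite birational, and one must invoke that a finite birational morphism onto $X$ which is an isomorphism over a dense open can fail to be an isomorphism only if it is a partial normalization — but here the target is $X$, not its normalization, and the map is generically an isomorphism of $X$ with itself, so degree considerations (the map has degree $1$ on function fields) plus finiteness force it to be an isomorphism. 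I would make this precise by noting $h^{-1}$ is also a rational map with $\grade\ge 2$ representative: indeed if $h$ is an isomorphism of $X$ then $h^{-1}$ is too and its coordinate presentation also has $\fm$-primary ideal. To close the loop cleanly I would: (i) show directly that a linear automorphism $g\in\mathrm{Aut}(X)\subseteq PGL_{n+1}$ has $\grade((g_0,\ldots,g_n),R)=2$ since the $g_i$ are a linear change of the $x_i$ and $(x_0,\ldots,x_n)=\fm$ has grade $2$ as $R$ is Cohen--Macaulay — this gives the reverse inclusion and also that $\Bir(X)_*\neq\emptyset$ iff $R$ is Cohen--Macaulay; (ii) show the forward inclusion by the morphism/isomorphism argument above, extracting linearity and $d=1$ from non-degeneracy and the classification of the very ample linear system $\mathcal{O}_X(1)$. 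The cycle ``CM $\Rightarrow \Bir(X)_*\ni\mathrm{id}$'', ``$\Bir(X)_*\neq\emptyset\Rightarrow R$ CM'', ``$\Bir(X)_*\neq\emptyset\Rightarrow\Bir(X)_*=\mathrm{Aut}_{\mathrm{lin}}(X)$'', and ``$\mathrm{Aut}_{\mathrm{lin}}(X)\subseteq\Bir(X)_*$ (nonempty as it contains $\mathrm{id}$, using CM)'' then yields all three equivalences.
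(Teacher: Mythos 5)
Your reduction of the first two bullets to the statement ``the identity has clear degree iff $\grade(\fm,R)\geq 2$ iff $R$ is Cohen--Macaulay'' (via $\grade(I,R)\leq\grade(\fm,R)=\depth(R)$ for any base ideal $I\subseteq\fm$) is exactly the paper's argument. Where you genuinely diverge is the heart of the matter, namely that every element of $\Bir(X)_*$ has clear degree $1$: the paper gets this purely algebraically from the Simis--Ulrich--Vasconcelos multiplicity bound (\autoref{SUV}) with $r=2$, $g=\Ht(I)=2$, $\mathfrak{R}=\emptyset$, so that birationality forces $e(R)=e(R)d$, i.e. $d=1$; you instead argue geometrically that $\grade(I,R)\geq 2$ in a two--dimensional $R$ makes $I$ $\fm$-primary, hence $\psi_h$ is a base-point-free, finite, degree-one morphism $X\to X$, and then compare pullbacks of $\mathcal{O}_X(1)$. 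That route can be made to work and is a legitimate alternative (it even yields the extra geometric information that clear-degree maps on a curve are everywhere-defined), but as written it has a gap precisely at the step you yourself flag.

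The gap is the assertion that a finite birational self-morphism of the (possibly non-normal) curve $X$ is an isomorphism. Neither of the two justifications you float survives: the criterion ``a birational projective morphism onto a curve is an isomorphism iff it contracts no curve'' is false without normality (the normalization $\widetilde{X}\to X$ contracts nothing and is not an isomorphism), and applying the base-locus argument to $h^{-1}$ is circular, since $h^{-1}\in\Bir(X)_*$ is exactly what is not known -- the paper's \autoref{veronese} shows this closure under inverses already fails one dimension up. The statement you need is nevertheless true and has a short proof you should supply: for $f:X\to X$ finite and birational, $\mathcal{O}_X\hookrightarrow f_*\mathcal{O}_X$ has finite-length cokernel $Q$, while finiteness gives $\chi(f_*\mathcal{O}_X)=\chi(\mathcal{O}_X)$, so $\mathrm{length}(Q)=0$ and $f$ is an isomorphism (equivalently, compare $\delta$-invariants). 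Better still, you can bypass the isomorphism step entirely: finiteness and degree one on function fields already give $\deg h^*\mathcal{O}_X(1)=\deg\mathcal{O}_X(1)$, whereas $h^*\mathcal{O}_X(1)\simeq\mathcal{O}_X(d)$ has degree $d\cdot\deg\mathcal{O}_X(1)>0$, forcing $d=1$; non-degeneracy of $X$ then makes the coefficient matrix of the linear representative invertible, so $h$ is the restriction of an element of $\mathrm{PGL}_{n+1}(k)$ stabilizing $X$, and in particular an automorphism. With that repair (or the Euler-characteristic lemma made explicit) your proposal is complete; the trade-off versus the paper is a more geometric, slightly longer argument in place of a one-line multiplicity computation.
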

\begin{proof} 

 The identity map has clear polynomial degree if and only if the irrelevant maximal ideal has  grade at least $2$. Since $R$ is of dimension $2$,  it is equivalent to Cohen-Maculayness of $R$.  Moreover, the base ideal of any rational map is contained in the irrelevant maximal ideal, hence if $\Bir(X)_*\neq \emptyset$ then the identity map has clear polynomial degree. This proves the equality of the first two parts. The third statement clearly implies the second one. Hence it remains to show that the first statement implies the third one. 
  
  Now assume that $R$ is Cohen-Macaulay. To prove the last assertion, we use the birationality criterion of Simis-Ulrich-Vasconcelos \cite[Theorem 6.6]{SUV01}.  Since the latter criterion holds in more general cases than our setting,  we present the adapted version for our purpose in  \autoref{SUV}.  Let $\phi:X\dasharrow X$ be a dominant rational map of clear degree $d$. Regarding the notation of  \autoref{SUV}, $r=2$, $g=\Ht(I)= 2$ and $\mathfrak{R}=\emptyset$.Therefore, $e(R)= e(R)d$  if and only if $\phi$ is birational.  That is $d=1$ if and only if $\phi$ is birational.

\end{proof}

\begin{Theorem}\label{SUV}\cite[Theorem 6.6]{SUV01}. Let $X$ be an irreducible projective variety with coordinate ring $R$ of dimension $r$. Let $\phi:X\dasharrow X$ be a dominant rational map of polynomial degree $d$.
Let $I$ be the ideal generated by that presentation. Set  $g=\Ht(I)$ and  $\mathfrak{R}:=\{\fp\in \Proj(R): \fp\supseteq I {\text ~and~} \dim(R/P)=\dim(R/I)\}.$  Then 
$$e(R)\leq e(R)d^{r-1}  -  \sum_{\fp\in \mathfrak{R}}e(I_{\fp},R_{\fp})e(R/P)d^{r-g-1}.$$
Equality happens if and only if $\phi$ is birational and $r\leq g+1$.

\end{Theorem}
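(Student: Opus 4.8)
This is the specialization of \cite[Theorem 6.6]{SUV01} to the dominant self-map $\phi:X\dasharrow X$, so my plan is to deduce it from the Simis--Ulrich--Vasconcelos degree formula, and I only sketch the mechanism. First I would record the reductions. Write $h_0,\ldots,h_n\in R_d$ for the given presentation of $\phi$ and $I=(h_0,\ldots,h_n)$, and let $\mathcal{R}_R(I)=R[It]$ be the Rees algebra with special fiber $\mathcal{F}(I)=\mathcal{R}_R(I)/\fm\mathcal{R}_R(I)\cong k[h_0,\ldots,h_n]$. Since $\phi$ is dominant we have $\Proj\mathcal{F}(I)=\overline{\phi(X)}=X$, so $I$ has maximal analytic spread $r$; and the degree $\delta:=[\mathrm{Frac}(R):\phi^{*}\mathrm{Frac}(R)]$ is a positive integer with $\delta=1$ exactly when $\phi$ is birational.

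The engine is a multiplicity identity coming from the blow-up of $X$ along $I$ (equivalently, from the bigraded structure of $\mathcal{R}_R(I)$): the $d$-uple re-embedding of $X$ has degree $e(R)d^{r-1}$, and decomposing its top self-intersection along the exceptional locus over the minimal primes of $I$ produces
$$e(R)\,d^{r-1}\;=\;\delta\cdot e(R)\;+\;\sum_{\fp\in\mathfrak{R}}e(I_{\fp},R_{\fp})\,e(R/\fp)\,d^{r-g-1}\;+\;T,$$
where the first term on the right is $\delta$ times the degree of the image $\overline{\phi(X)}=X$, the middle sum gathers the contributions of the height-$g$ (that is, top-dimensional) components of $V(I)$, which are precisely the primes $\fp\in\mathfrak{R}$, and $T\ge 0$ gathers the remaining ``tails'': contributions carrying strictly smaller powers of $d$, coming from positive-dimensional base components and from minimal primes of height exceeding $g=\Ht(I)$. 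Discarding $T\ge 0$ and using $\delta\ge 1$ then yields
$$e(R)\;\le\;\delta\cdot e(R)\;\le\;e(R)\,d^{r-1}-\sum_{\fp\in\mathfrak{R}}e(I_{\fp},R_{\fp})\,e(R/\fp)\,d^{r-g-1},$$
which is the asserted bound.

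For the equality clause I would observe that equality along this chain forces both $\delta=1$ (so $\phi$ is birational) and $T=0$. The term $T$ vanishes exactly when each component of $V(I)$ contributes a single term with no lower tail, i.e.\ when every component is $0$-dimensional in $\Proj$ or empty; equivalently $\Dim(R/I)\le 1$, equivalently $r\le g+1$. Conversely, when $r\le g+1$ one has $T=0$ identically, so $\delta=1$ returns equality. I expect the main obstacle to be the first display itself: proving the multiplicity identity, pinning down the middle sum in exactly this closed form, and establishing $T\ge 0$. This is the technical core of \cite{SUV01}, resting on Rees algebras, joint reductions and mixed (or $j$-) multiplicities; in the write-up I would simply invoke \cite[Theorem 6.6]{SUV01} after checking that our setting --- $X$ irreducible and projective over $k$, $\phi$ dominant with $\overline{\phi(X)}=X$, and $I$ generated in the single degree $d$ --- is covered by their hypotheses.
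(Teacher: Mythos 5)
The paper gives no proof of this statement at all: it is quoted, in notation adapted to the self-map setting $\phi:X\dasharrow X$, directly from Theorem 6.6 of the cited Simis--Ulrich--Vasconcelos paper, and since your write-up also ends by simply invoking that theorem after checking its hypotheses, you are taking essentially the same route. Your intermediate sketch of the multiplicity decomposition, the positivity of the tail term, and the equality analysis is a plausible heuristic for what happens inside the cited proof, but nothing beyond the citation is required here.
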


In dimension $2$, there are some varieties that have nice properties but do not have a group structure on their birational maps. We used Macaulay-2\cite{grayson1998macaulay} to construct the following example.

\begin{Example}\label{veronese} Let $V\subset \mathbb{P}^5$ be the Veronese surface realized by the Veronese embedding. Let $V_1$ be the projection of $V$ into $\mathbb{P}^4$. $V_1$ is the image of the following map $\nu$ 
$$
 \begin{array}{rcl}
  \nu:\mathbb{P}^2&\dasharrow& V_1\\
(s:t:u)&\mapsto& (st:su:t^2:tu:u^2)
\end{array}
$$ 
$\nu^{-1}:V_1\dasharrow \mathbb{P}^2$ is given by $(a_0:\cdots:a_4)\mapsto (a_1:a_3:a_4)$.
Consider the birational map 
$f:\mathbb{P}^2 \dasharrow\mathbb{P}^2$, $
(s:t:u)\mapsto (st:su:t^2).$ Let $\varphi=\nu f\nu^{-1}:V_1\dasharrow V_1$. $\varphi$ is a birational map represented by $$(a_0a_1:a_0a_2:a_1^2:a_0a_3:a_2^2).$$
Calculations show that $\varphi$ has clear degree $2$. However, having a clear degree does not extend either to its inverse or its successive powers. This example shows that $\Bir(V_1)_*$ does not admit any structure as a group or a monoid. Furthermore, we mention that $V_1$ is a quadric rational smooth surface. Its coordinate ring is Cohen-Macaulay and not factorial.
 
\end{Example} 

In our experience with Veronese surface, we could not find any such map, as the one in   \autoref{veronese}. However, we still could not show that $\Bir(V)_*$ is a group.

\subsection{The structure of $\Bir(X)_d$}
The following proposition has a critical role in the proof of  \autoref{Tmain}. It has been already stated in \cite[Proposition 1.2]{HS17}, and its proof is based on the main result of  \cite{DHS12}. 
\begin{Proposition}\label{Pinversedegree}   Let $X\subseteq \mathbb{P}_k^n$ denote a closed reduced and irreducible subvariety, and  $\mathfrak{F}:X\dasharrow  \mathbb{P}_k^m$ a birational map  admitting a representative of polynomial degree $d\geq 1$. Let $Y=\Image \mathcal{F}$. Assume that $X$ and $Y$ are non-degenerated and $\delta=\max\{d+1,d_0\}$ where  $d_{0}$ is the maximum degree among the degrees of a minimal generating set of  $\mathcal{I}(X)$.
 
 Then the inverse map  $\mathfrak{F}^{-1}$ admits a representative of  polynomial degree at most
 	\begin{equation}\label{bound}
	 B(X,d,n)=2n\bigg[\frac{1}{2}\delta^{2(n+m+1-\Dim(X))^2}+\delta\bigg]^{2^{(\Dim(X)+2)}}.
	 \end{equation}
 	
\end{Proposition}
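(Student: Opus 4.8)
The statement is an effective bound on the degree of the inverse of a birational map, so the natural strategy is to reduce it to a known effective Nullstellensatz-type or elimination-theoretic result. The plan is to realize $\mathfrak{F}^{-1}$ through an explicit elimination of variables from the equations defining the graph of $\mathfrak{F}$, and then to feed the degree data of those equations into a quantitative version of the elimination bound — precisely the kind of bound established in \cite{DHS12} (which, as the excerpt notes, is the engine behind \cite[Proposition 1.2]{HS17}). So the first step is bookkeeping: write $\mathfrak{F}=(F_0:\cdots:F_m)$ with $\deg F_i=d$, consider the graph $\Gamma\subseteq X\times\mathbb{P}^m_k$ cut out inside $\mathbb{P}^n\times\mathbb{P}^m$ by the bihomogeneous equations $y_iF_j(x)-y_jF_i(x)$ (degree $(d,1)$) together with generators of $\mathcal{I}(X)$ (degree $(\leq d_0,0)$); this explains the appearance of $\delta=\max\{d+1,d_0\}$ as the controlling degree. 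Since $\mathfrak{F}$ is birational, $\Gamma$ is also the graph of $\mathfrak{F}^{-1}$, and the second projection $\Gamma\to Y$ is birational; a representative of $\mathfrak{F}^{-1}$ is obtained by solving, over the generic point of $Y$, the linear-in-$x$ system coming from those equations, i.e. by a Cramer-type computation whose entries have controlled degree in $y$.

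The second step is to invoke the effective result of \cite{DHS12} in the form already used in \cite{HS17}: an arithmetic/geometric elimination procedure applied to a system of $r':=n+m+1$ (or thereabouts) variables and equations of degree at most $\delta$ produces, after $O(\Dim X)$ rounds of elimination, output polynomials of degree bounded by an iterated exponential of the shape $\bigl[\tfrac12\delta^{2(r')^2}+\delta\bigr]^{2^{O(\Dim X)}}$; here $r'=n+m+1-\Dim(X)$ is the relevant codimension count that appears squared in the exponent, and the outer exponent $2^{\Dim(X)+2}$ records the number of elimination steps (including an additive slack of $2$ for the two auxiliary rounds needed to pass from the graph to an honest representative). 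The front factor $2n$ absorbs the passage from "coordinates of a point on $Y$" to "the $n+1$ forms $G_0,\dots,G_n$ in the $y$'s defining $\mathfrak{F}^{-1}$", together with the clearing of denominators. I would present the derivation of \eqref{bound} as essentially a substitution of $\delta$ and of $r'=n+m+1-\Dim(X)$ into the bound of \cite{DHS12}/\cite[Proposition 1.2]{HS17}, verifying along the way that the number of elimination rounds is indeed $\Dim(X)+2$ and that the ambient dimension enters exactly as claimed.

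\textbf{Main obstacle.} The genuinely delicate point is not the structure of the argument but the precise matching of constants: one must be careful that the hypotheses of the effective elimination theorem of \cite{DHS12} are met (reducedness of $X$, the non-degeneracy of $X$ and $Y$ — needed so that the linear forms $y_i$ pull back to honest degree-$d$ forms with no spurious common factor, and so that the generic fibre of $\Gamma\to Y$ is a single reduced point), and that the iterated-exponential tower is read off with the correct base $\tfrac12\delta^{2(n+m+1-\Dim X)^2}+\delta$ and the correct height $2^{\Dim(X)+2}$. In other words, the hard part is auditing the quantitative elimination bound rather than proving anything new; once the system defining $\Gamma$ is written down with its degrees, the bound \eqref{bound} follows by direct citation and substitution, so I would keep this step terse and refer to \cite{DHS12} and \cite[Proposition 1.2]{HS17} for the underlying effective statement.
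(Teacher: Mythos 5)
Your proposal ultimately rests on citing \cite[Proposition 1.2]{HS17} (itself based on the main result of \cite{DHS12}) and substituting the degree data $\delta$ and $n+m+1-\Dim(X)$, which is exactly what the paper does: it gives no independent proof and presents the bound \eqref{bound} as a restatement of that reference. The intermediate reconstruction you sketch (graph of $\mathfrak{F}$, Cramer-type solution over the generic point of $Y$, and the accounting of where the factor $2n$ and the exponent $2^{\Dim(X)+2}$ arise) is speculative and unnecessary once the citation is made--and note that the main theorem of \cite{DHS12} is a rank-based criterion of birationality rather than an effective elimination algorithm, so the quantitative content you would be auditing really lives in \cite{HS17}, not in the form you describe.
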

In the case where $X=Y$, accordingly $n=m$,  we call the formula in  \autoref{bound},  the inverse bound and denote it by $B(X,d)$.

In terms of determining an upper bound,  this  result is a generalization of a result attributed to O.Gabber. Gabber's result  states that for  $\mathfrak{F}: \mathbb{P}^n\dasharrow  \mathbb{P}^n$ of degree $d$ the inverse map is of degree at most $d^{n-1}$. It is clear that the bound $B(\mathbb{P}^n,d)$, above,  is much higher than $d^{n-1}$.

 To prove the main theorem,  we adapt some of the notation of   \cite[Lemma 2.4]{BF13}.   Returning the notation in Section 2, we  fix an integer $d$ and denote by $M_1,\ldots,M_N$  all of the monomials of degree $d$ in the polynomial ring $S=k[x_0,\cdots,x_n]$.
 Consider the projective space $\mathbb{P}_k^{(n+1)N-1}$. Any point $\aa$ of $\mathbb{P}_k^{(n+1)N-1}$ is an  $(n+1)$-uple  $(\aa_0,\ldots,\aa_n)$ with $\aa_i\in \mathbb{A}^N_k$. For each index $\aa_i$ we construct a polynomial 
 \begin{equation}\label{fa}
 f_{\aa_i}= \sum_{j=1}^N (\aa_i)_jM_j.
 \end{equation}
 We denote $W_d$ to be the set of equivalence classes of nonzero $(n+1)$-uple $(f_0,\ldots,f_n)$ of homogeneous polynomials $f_i \in S$ of degree $d$ where $(f_0,\ldots,f_n)$ is equivalent to $(\lambda f_0,\ldots,\lambda f_n)$ for any $\lambda\in k\setminus{\{0\}}$. The equivalence class of $(f_0,\ldots,f_n)$ will be denoted by $(f_0:\ldots :f_n)$. It is clear that  $\mathbb{P}^{(n+1)N-1}$   is isomorphic to $W_d$ via the correspondence:
 \begin{align*}
 \theta:\mathbb{P}^{(n+1)N-1}&\to W_d\\
 \aa=(\aa_0,\ldots,\aa_n)&\mapsto f_{\aa}=(f_{\aa_0}:\cdots:f_{\aa_n}).
 \end{align*}

We define $H_d\subseteq W_d$ to be the set of elements $f=(f_0,\ldots,f_n)\in W_d$ such that the rational map 
\begin{align}
\psi_f:X&\dasharrow X\\
(x_0:\cdots:x_n)&\mapsto  (f_0(x_0,...,x_n) : \cdots: f_n(x_0,...,x_n))\nonumber
\end{align} 

  is a well-defined birational  map of {\it polynomial degree }$d$. 
  
  $\tilde{H}_d\subseteq H_d$ is the set of elements  $f=(f_0,\ldots,f_n)\in H_d$ such that $\psi_f$ is birational of {\it clear polynomial degree} $d$.  
  
  The last concept before stating the main theorem is the definition of {\it constructible set}. A subset of a topological space is called {\it constructible} when it is a finite union of {\it locally closed} sets. A set is {\it locally closed}  if it is an intersection of a closed set  and an open set. For a constructible subset $\mathcal{C}$ of a projective space. The embedding dimension of $\mathcal{C}$ is  defined by $ \edim(\mathcal{C}):=\min\{m:\mathcal{C}\subseteq \mathbb{P}^{m} \text{~~as a closed embedding} \}$.


\begin{Theorem}\label{Tmain}  Let $d$ be an integer,  $k$  an infinite field and $X\subseteq \mathbb{P}^n$ an irreducible projective non-degenerated variety with coordinate ring $R$ and Hilbert function $\HF_{R}$.   Then 
\begin{itemize}
\item[(i)]{$H_d$ and $\tilde{H}_d$ (defined above) are  quasi-projective varieties;}
\item[(ii)]{$\Bir(X)_d$ has the structure of  a constructible algebraic set; }
\item[(iii)]{$\edim(\Bir(X)_d)\leq (n+1)\HF_{R}(d)-1$.}
\end{itemize}
\end{Theorem}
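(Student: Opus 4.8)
The three parts correspond to the three open-loci results of Section 2 applied to the parameter space $\mathbb{P}^{(n+1)N-1}$, together with the degree bound of \autoref{Pinversedegree}. I would organize the argument as follows.

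\emph{Part (i).} First I would show $H_d$ is locally closed, hence quasi-projective. The locus of $f=(f_{\aa_0},\ldots,f_{\aa_n})$ for which $\psi_f$ is a well-defined \emph{morphism} on a dense open of $X$ is automatic once the $f_{\aa_i}$ do not all vanish on $X$, i.e. not all $f_{\aa_i}\in\mathcal{I}(X)$; by \autoref{Grobner} (or \autoref{linearV}) this is an open condition. The requirement that the image lie in $X$ means $g(f_{\aa_0},\ldots,f_{\aa_n})\in\mathcal{I}(X)$ for each generator $g$ of $\mathcal{I}(X)$, which by \autoref{Grobner} (with $\fb=\mathcal{I}(X)$, $p=g$) is a \emph{closed} condition. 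Dominance/birationality of $\psi_f$ can be detected by the rank of an appropriate Jacobian-type or multiplicity matrix being maximal on the parameter space — again a constructible (indeed, for dominance, open) condition; combining, $H_d$ is the intersection of an open set with the closed set cut out by the image equations, so it is locally closed. For $\tilde H_d$ we intersect $H_d$ with the grade-$\geq 2$ locus $G_2$ of \autoref{LG2} (pulled back along the diagonal-type map sending $(f_0,\ldots,f_n)$ to pairs of its entries — more precisely one intersects with the locus where the ideal $(f_{\aa_0},\ldots,f_{\aa_n})R$ has grade $\geq 2$, which by the argument of \autoref{LG2} is $\mathcal{C}_2$ minus finitely many products of linear subspaces, hence open). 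Thus $\tilde H_d$ is open in $H_d$, in particular quasi-projective.

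\emph{Part (ii).} There is a surjective map $\tilde H_d\to\Bir(X)_d$ sending $f$ to $\psi_f$; its fibers are the orbits under multiplication by a nonzero element of $R_0=k$ together with the (finitely many? in general) alternative clear representatives. The key point making $\Bir(X)_d$ itself constructible is that by \cite[Proposition 1.1]{Si04} a map of clear degree $d$ has a representative of grade $\geq 2$ that is \emph{unique up to a scalar in $R$}, i.e. up to a unit — and since $R$ is a graded domain the only units are in $k^\times$, so the representative in fixed degree $d$ is unique up to $k^\times$, i.e. unique as a point of $W_d$. Hence $\tilde H_d\to\Bir(X)_d$ is actually a bijection, and $\Bir(X)_d$ inherits the structure of the quasi-projective variety $\tilde H_d$; in particular it is constructible. (If one instead only wants $\Bir(X)_d$ as a subset of some ambient $\Bir(X)_{\ast}$ with its Serre-type topology, one still gets constructibility because the image of a constructible set under the natural map is constructible by Chevalley.)

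\emph{Part (iii).} By the bijection of Part (ii), $\edim(\Bir(X)_d)=\edim(\tilde H_d)\leq\edim(W_d)$. Now $W_d$ is \emph{not} all of $\mathbb{P}^{(n+1)N-1}$ when $\mathcal{I}(X)_d\neq 0$: every representative $(f_0:\cdots:f_n)$ of a map $X\dashrightarrow X$ is only well-defined modulo $\mathcal{I}(X)_d$ in each coordinate, so the honest parameter space for maps $X\dashrightarrow X$ is $\mathbb{P}\big((R_d)^{\oplus(n+1)}\big)=\mathbb{P}^{(n+1)\HF_R(d)-1}$, where $\HF_R(d)=N-\HF_{\mathcal{I}(X)}(d)=\dim_k R_d$. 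Concretely, one factors the parameter map through the linear projection $\mathbb{P}^{(n+1)N-1}\dashrightarrow\mathbb{P}^{(n+1)\HF_R(d)-1}$ induced by $S_d\twoheadrightarrow R_d$; two points of $\mathbb{P}^{(n+1)N-1}$ with the same image give the same rational map, so $H_d$ and $\tilde H_d$ descend to locally closed subsets of $\mathbb{P}^{(n+1)\HF_R(d)-1}$, giving $\edim(\Bir(X)_d)\leq(n+1)\HF_R(d)-1$.

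\emph{Main obstacle.} The delicate point is not the openness inputs (those are exactly Sections 2's theorems) but showing that membership in $\Bir(X)_d$ — as opposed to the a priori larger $H_d$ of maps of mere polynomial degree $d$ — is a constructible condition on the parameters, and in particular handling birationality versus dominance uniformly in families. Detecting birationality via \autoref{SUV} involves the multiplicity $e(R)$ and the degree-$d$ condition $e(R)=e(R)d^{r-1}$; one must check that the relevant multiplicity/rank functions are constructible on the parameter space (upper semicontinuity of fiber dimension, constructibility of the degree of the image), and that the bound $B(X,d)$ from \autoref{Pinversedegree} lets one restrict the inverse to a \emph{fixed} parameter space (so that ``$\psi_f$ birational'' is cut out inside finitely much data). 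Assembling these into a single constructible description of $\Bir(X)_d$, rather than an infinite union, is where the inverse-degree bound does its essential work.
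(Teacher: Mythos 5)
Your overall architecture (parameter space, the Section~2 open loci, the inverse-degree bound, projecting modulo $\mathcal{I}(X)_d$, Chevalley) is the right one, but there are two genuine gaps. The first is in your Part (ii): the claim that $\tilde H_d\to \Bir(X)_d$ is a bijection is false whenever $\HF_{\fp}(d)\neq 0$. Simis' uniqueness-up-to-scalar for grade $\geq 2$ representatives holds for the classes in $R=S/\fp$, not for the lifted polynomials in $S$: two tuples differing coordinatewise by elements of $\fp_d$ are distinct points of $W_d$ yet define the same element of $\Bir(X)_d$, so $\tilde H_d$ does not inherit its structure to $\Bir(X)_d$ directly (that happens only when $\HF_{\fp}(d)=0$, which is exactly \autoref{C1}). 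You essentially notice this in your Part (iii), where you introduce the linear projection induced by $S_d\twoheadrightarrow R_d$ --- this is the paper's map $\mathcal{D}$ --- but that contradicts your Part (ii), and your assertion that $H_d$ and $\tilde H_d$ ``descend to locally closed subsets'' overclaims: the image of a quasi-projective set under this morphism is only constructible (Chevalley), which is why the theorem asserts constructibility rather than quasi-projectivity of $\Bir(X)_d$. One must also check that $\mathcal{D}$ restricted to $\tilde H_d$ is everywhere defined (no coordinate block lies entirely in $\fp_d$, which uses grade $\geq 2$), and then prove the induced map $\Image(\mathcal{D}')\to\Bir(X)_d$ is well defined and bijective; none of this is in your text.

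The second gap is in Part (i): you never actually cut out ``$\psi_f$ birational'' (nor dominance) as an algebraic condition on the parameters. Your suggestion of a ``Jacobian-type or multiplicity matrix'' is not carried out and is problematic: $k$ has arbitrary characteristic, so Jacobian rank does not detect dominance or birationality, and the equality case of \autoref{SUV} characterizes birationality only under $r\leq g+1$, so it cannot be used uniformly in families. The paper's mechanism is different and is the real content: by \autoref{Pinversedegree} an inverse, if it exists, has polynomial degree at most $d'=B(X,d)$, so one forms the incidence set $Y\subseteq W_{d'}\times W_d$ of pairs $(g,f)$ with both compositions proportional to the identity modulo $\fp$ and with $f,g$ mapping $X$ to $X$; these are closed conditions by \autoref{Grobner}, and the projection $\pi_2(Y)$ is closed by elimination theory. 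Dominance is then the open condition $\ell_R(f_{\aa_0},\ldots,f_{\aa_n})=r$, i.e.\ membership in $\theta(\mathcal{N}_{n+1})$ from \autoref{LNn+1} (this is where the infinite field enters), giving $H_d=\pi_2(Y)\cap\theta(\mathcal{N}_{n+1})$. Finally, for $\tilde H_d$ your reduction to $G_2$ via ``pairs of its entries'' is not correct as stated: a length-two regular sequence in $(f_{\aa_0},\ldots,f_{\aa_n})R$ need not occur among the given generators, so one must take the union over $\sigma\in\mathrm{GL}_{n+1}(k)$ of the pullbacks $\sigma^{-1}(G_2\times\mathbb{A}^{(n-1)N})$ (the set $U_2$ of the paper) to get an open condition equivalent to $\grade\geq 2$.
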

\begin{proof} (i) The first paragraph of the proof follows the argument of \cite[Lemma 2.4]{BF13}. Let $R=k[x_0,\cdots,x_n]/\fp$ be the coordinate ring of $X$. Let  $\fp=(p_1,\ldots,p_t)$ and $r=\Dim(R)=\Dim(X)+1$. Let $d'=B(X,d)$ be the inverse bound introduced in  \autoref{Pinversedegree}. We denote by $Y'\subseteq W_{d'}\times W_d$ the set consisting of elements $(g, f )$ such that $h := (g_0(f_0,\ldots,f_n),\ldots, g_n(f_0,\ldots,f_n))$  and $h' := (f_0(g_0,\ldots,g_n),\ldots, f_n(g_0,\ldots,g_n))$ are  multiples of the identity modulo $\fp$; i.e., $h_ix_j-h_jx_i\in \fp$  and $h'_ix_j-h'_jx_i\in \fp$ for all $i,j$.

In order to $f:X\dasharrow X$ and $g:X\dasharrow X$ be well-defined maps, the coordinates must satisfy $p_i(f_0,\ldots,f_n)\in \fp$ and $p_i(g_0,\ldots,g_n)\in \fp$ for all $i$. 

Let $$\mathcal{F}_i:=\{(f_{\aa_0},\ldots,f_{\aa_n})\in W_d:p_i(f_{\aa_0},\ldots,f_{\aa_n})\in \fp\}$$ and $$\mathcal{G}_i:=\{(f_{\aa'_0},\ldots,f_{\aa'_{n}})\in W_{d'}:p_i(f_{\aa'_0},\ldots,f_{\aa'_n})\in \fp\}.$$
  \autoref{Grobner}  shows that   $\mathcal{F}_i $ and $\mathcal{G}_i$ are closed varieties of $W_{d}$ and $W_{d'}$ respectively.  Similarly, $Y'$ is a closed subset of $W_{d'}\times W_d$. Hence 
$$Y:=Y'\cap (\bigcap_{i=1}^t \mathcal{F}_i \times W_{d'})\cap (W_d\times \bigcap_{i=1}^t \mathcal{G}_i )$$
is a closed subset of $W_{d'}\times W_d$. Now, applying the principal theorem of the elimination theory,  the image of $Y$ by the projection $\pi_2:W_{d'}\times W_d\to W_d$ is closed in $W_d$.

Next, we look for $f=(f_0:\ldots:f_n)\in W_d$ such that the  rational map $f:X\dasharrow X$ is dominant. Being dominant  means that the Krull dimension of the image of $f$ is $r-1$. The coordinate ring of the closure of the image of $f$ is $\Proj(\mathcal{F}_R(I))$ (see definitions  after  \autoref{Pinversedegree}) wherein $I$ is the ideal generated by the classes of $f_0,\ldots,f_n$ in $R$. Therefore the map $f$ is dominant if and only if $\ell_R(I)=r$.  According to  \autoref{LNn+1}, this defines the set $\mathcal{N}_{n+1}$. Therefore, 
$$H_d=\{h\in W_d: \psi_h:X \dasharrow X \text{~~is birational of polynomial degree~~} d\}=\pi_2(Y)\cap \theta(\mathcal{N}_{n+1}).$$
That is an open subset of the algebraic set  $\pi_2(Y)$ thence a quasi-projective variety. 

Next,  we show that $\tilde{H}_d$ is a quasi-projective variety. $\tilde{H}_d$ is the intersection of  $H_d$ with those birational maps whose base ideal is of  grade at least $2$.  The regular sequence of length $2$ may not be among the generators, however it is inside a linear combination of them. Hence we need to work with
$$U_2=\{(\aa_0,\ldots,\aa_n)\in \mathbb{P}^{(n+1)N-1 }:\sigma(\aa_0,\ldots,\aa_n)\in G_2\times \mathbb{A}^{(n-1)N} \text{~for some~} \sigma\in {\rm GL}_{n+1}(k)\}.$$ 
That is 
$$U_2=\bigcup_{\sigma\in {\rm GL}_{n+1}(k) }\sigma^{-1}(G_2\times \mathbb{A}^{(n-1)N}).$$
Since  $G_2$ is an open  subset,  by  \autoref{LG2}, $U_2\subseteq \mathbb{P}^{(n+1)N-1}$ is open, as well.

\medskip
We now have $\tilde{H}_d=\pi_2(Y)\cap \theta(\mathcal{N}_{n+1}\cap U_2)$ thence $\tilde{H}_d$ is  a  quasi-projective variety.
\medskip

(ii)  In the case where $X=\mathbb{P}^n$, the proof of this part is to show that there is a bijection between $\tilde{H}_d$ and $ \Bir(X)_d$.  However, when $X\neq \mathbb{P}^n$, even if  a map $f=(f_0,\ldots,f_n):X\dasharrow X$ belongs to $\tilde{H}_d$, the defining polynomials of $f$ in $S$ are not uniquely defined. In other words, two sets of polynomials  in $S$ say $f_0,\ldots,f_n$ and $f'_0,\ldots,f'_n $,  define the same map over $X$ if $f_i-f'_i\in \fp$ for all $i$. 

To address this ambiguity, we revisit \autoref{linearV}.
Let 
\begin{equation}\label{s}
s=N-{\HF}_{\fp}(d).
\end{equation}
 Then the ideal of definition of the linear variety  $V_{\fp}^{z}:=\{(a_1,\ldots,a_N)\in \mathbb{P}^{N-1}_k:\sum_{i=1}^N a_iM_i\in \fp\}$  is determined by  $s$ linear equations $l_1,\ldots,l_s$.  
 Let $\mathcal{L}:k^N\to k^s$ be the linear map defined by $l_1,\ldots,l_s$. By repeating $\mathcal{L}$ on the diagonal, we extend it to a map $\mathcal{L}':k^{(n+1)N}\to k^{(n+1)s}$ thence to a map $\mathcal{D}: W_d\dasharrow \mathbb{P}^{(n+1)s-1}$. 
 
 More precisely, let $y_1,\ldots,y_N$ be the coordinates of $\mathbb{A}_k^N$, $l_i=\sum_{j=1}^N l_{ij}y_j$ for $i=1,\ldots,s$  and  $(f_{\aa_0}:\ldots :f_{\aa_n})\in W_d$. Then
  $$\mathcal{D}(f_{\aa_0}:\cdots :f_{\aa_n})=(l_1(\aa_0):\cdots:l_s(\aa_0):\cdots:l_1(\aa_n):\cdots:l_s(\aa_n)).$$
  Let $\mathcal{D}'=\mathcal{D}|_{\tilde{H}_d }$.

Notice that $\mathcal{D}'$ is a (regular) morphism. Since for any $0\leq j\leq n$, the linear forms $l_i(\aa_j)$  all vanish for $0\leq i\leq s$ if and only if $f_{\aa_j} \in \fp$.  This provides a contradiction to the choice of $(f_{\aa_0}: \cdots: f_{\aa_n})$ as a rational map over $X$ of grade at least $2$ in the coordinate ring $R=S/ \fp$.

Recalling the definition of the clear polynomial degree of a birational map, we have the natural  map 
\begin{align*}
\Psi_d:{\tilde H}_d&\longrightarrow \Bir(X)_d\\
h&\mapsto  \psi_h.
\end{align*} 
  The map $\Psi_d$ induces a well-defined map $\Phi$, according to the following triangle 
\begin{equation}\label{diagram}
\xymatrix{
	& \tilde{H}_d \ar^{\Psi_d}[rd]                         &                   \\
	\Image(\mathcal{D}')\ar@^{<-} [ru]^{\mathcal{D}'} \ar[rr]^{\Phi } &   & \Bir(X)_d.  \\    
}
\end{equation}

We verify that $\Phi$ is bijective.  

Well-definedness: Let $g=\mathcal{D}'(f)=\mathcal{D}'(f')$ for  $f=(f_{\aa_0}:\cdots:f_{\aa_n})$ and $f'=(f_{\aa'_0}: \ldots: f_{\aa'_n})$ in $\tilde{H}_d $.  We have to show that $\Phi(g):=\Psi_{d}(f)=\Psi_{d}(f')=:\Phi(g)$. 
Since $\mathcal{D}'$ is a linear map, $\mathcal{D}'(f-f')=0$. Therefore, $l_j(\aa_i-\aa_i')=0$ for all $i$ and $j$.  Hence $\aa_i-\aa_i'\in V_{\fp}$. Equivalently, $f_{\aa'_i}-f_{\aa_{i}}\in \fp$. Therefore the images of $f$ and $f'$ in $R$ are identical. In particular, $\Psi_d(f)=\Psi_d(f')\in \Bir(X)_d$.

Injectivity: Suppose that $f=(f_{\aa_0}:\cdots:f_{\aa_n})$ and $f'=(f_{\aa'_0}: \ldots: f_{\aa'_n})$ are two elements of $\tilde{H}_d$ such that $\Phi (\mathcal{D}'(f))=\Phi(\mathcal{D}'(f'))$.  Equivalently, $\Psi_d(f)=\Psi_d(f')$, or  $\psi_f=\psi_{f'}\in \Bir(X)_d$.
Since the base ideal of $\psi_f$ and $\psi_{f'}$, as ideals of $R$, have grade at least $2$ over $R$, they differ by multiplication into a constant. Thus, if  the difference depends on a constant multiplication,  lifting to the (quasi-)projective space  $\tilde{H}_d$, we have $(f_{\aa_0}:\ldots:f_{\aa_n})=(f_{\aa'_0}:\ldots:f_{\aa'_n})$--here is the place where we need to separate maps of clear degree from other maps.  The other point is that lifting from $\Bir(X)_d$ to  ${\tilde H}_d$ depends on  the natural ring map $S\to R$. Hence, the two representatives may differ by the equations of $\fp$, that is, (looking at the representatives as elements of $S$)  $f_{\aa_i}-f_{\aa_i'}\in \fp$ for all $i$.   Hence $\aa_i-\aa_i'\in V_{\fp}$. Therefore, $l_j(\aa_i-\aa_i')=0$ for all $i$ and $j$. It then follows that $$\mathcal{L}'(f_{\aa_0}-f_{\aa'_0}, \ldots, f_{\aa_n}-f_{\aa'_n})=0.$$
Since $\mathcal{D}$ is a linear map, we get $$\mathcal{D}'(f)=\mathcal{D}'(f').$$

Surjectivity: Let $h=(f_{\aa_0}:\cdots:f_{\aa_n}):X\dasharrow X$ be a birational map of clear  degree $d$. Since $\grade_R(f_{\aa_0},\ldots,f_{\aa_n})\geq 2$, we have $(f_{\aa_0}:\cdots:f_{\aa_n}) \in \theta(U_2)$. Since $h$ is dominant $\Dim(\Proj(k[f_{\aa_0},\ldots,f_{\aa_n}]))=\Dim(X)$ hence $\ell_R(f_{\aa_0},\ldots,f_{\aa_n})=r$; so that $(f_{\aa_0}:\cdots:f_{\aa_n})\in \theta( \mathcal{N}_{n+1})$, and finally since the birational map $h$ has an inverse of polynomial degree at most $d'=B(X,d)$ by  \autoref{Pinversedegree}, $(f_{\aa_0}:\cdots:f_{\aa_n}) \in \pi_2(Y)$; accordingly $(f_{\aa_0}:\cdots:f_{\aa_n})\in \pi_2(Y)\cap \theta(\mathcal{N}_{n+1}\cap U_2)=\tilde{H}_d$.  Therefore  $\Phi(\mathcal{D}'((f_{\aa_0}:\cdots:f_{\aa_n})))=h$.

\medskip
The bijectivity between $\Image(\mathcal{D}')$ and  $\Bir(X)_d$ ensues that the latter inherits the same topological structure as any one of the $\Image(\mathcal{D}')$. Since $\tilde{H}_d$ is quasi-projective,  by part (i), then  Chevalley's Theorem implies that $\Image(\mathcal{D}')$ is constructible,  and so is $\Bir(X)_d$.

(iii). According to the proof of part (ii), $\Bir(X)_{d}$ is embedded in the projective space where $\Image(\mathcal{D}')$ is embedded. Since $\mathcal{D}': \tilde{H}_d\longrightarrow \mathbb{P}^{(n+1)s-1}$, we have $\edim(\Bir(X)_{d})\leq (n+1)s-1$.  According to \autoref{s}, $s=N-{\HF}_{\fp}(d)$. The latter is equal to $\HF_{R}(d)$, since $N=\binomial{n+d}{d}=\HF_{S}(d)$.
\end{proof}


An immediate corollary is to decide when $\Bir(X)_d$ is indeed quasi-projective. 

\begin{Corollary}\label{C1} With the same token  as  \autoref{Tmain}, if $\HF_{\fp}(d)=0$, then $\Bir(X)_d$ has the structure of a quasi-projective variety. 
\end{Corollary}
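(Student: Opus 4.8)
The plan is to revisit the proof of \autoref{Tmain}(ii) and observe that the hypothesis $\HF_{\fp}(d)=0$ forces the auxiliary linear map $\mathcal{D}'$ to be an isomorphism onto its image, so that the constructible set $\Image(\mathcal{D}')$ is literally the quasi-projective variety $\tilde{H}_d$, and the bijection $\Phi$ then transports the quasi-projective structure to $\Bir(X)_d$.

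First I would record the effect of the hypothesis on the data in that proof. Since $\HF_{\fp}(d)=\dim_k\fp_d=0$, the linear subvariety $V_{\fp}^{z}\subseteq\mathbb{P}^{N-1}$ is empty, the zero subspace $\fp_d=0$ of $k^N$ is cut out by the full set of coordinate forms, and so in the notation of \autoref{Tmain} one has $s=N-\HF_{\fp}(d)=N$ and may take the defining linear forms $l_1,\ldots,l_s$ to be $y_1,\ldots,y_N$. Then $\mathcal{L}\colon k^N\to k^N$ is the identity, its diagonal extension $\mathcal{L}'\colon k^{(n+1)N}\to k^{(n+1)N}$ is the identity, and the induced map $\mathcal{D}\colon W_d\to\mathbb{P}^{(n+1)s-1}=\mathbb{P}^{(n+1)N-1}$ satisfies $\mathcal{D}(f_{\aa_0}\colon\cdots\colon f_{\aa_n})=\aa$, i.e. $\mathcal{D}\circ\theta=\mathrm{id}_{\mathbb{P}^{(n+1)N-1}}$. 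In particular $\mathcal{D}$ is everywhere defined and is an isomorphism.

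Next I would conclude. Restricting to $\tilde{H}_d$, the morphism $\mathcal{D}'=\mathcal{D}|_{\tilde{H}_d}$ is then an isomorphism onto its image, so $\Image(\mathcal{D}')=\tilde{H}_d$ as locally closed subsets of $\mathbb{P}^{(n+1)N-1}$. By \autoref{Tmain}(i), $\tilde{H}_d$ is a quasi-projective variety, and by the bijection $\Phi\colon\Image(\mathcal{D}')\to\Bir(X)_d$ established in the proof of \autoref{Tmain}(ii), $\Bir(X)_d$ inherits this structure. (Equivalently: when $\fp_d=0$, two representatives of a map by degree-$d$ polynomials of $S$ differ by an element of $\fp_d=0$, hence coincide, so $\Psi_d\colon\tilde{H}_d\to\Bir(X)_d$ is itself a bijection and there is no quotient to take.) There is no real obstacle here; the only care needed is the bookkeeping — checking that the choice $l_i=y_i$ is legitimate and that, under this hypothesis, $\Phi$ is induced by the identity and is thus an isomorphism of varieties, not merely a bijection of sets. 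As a special case, $\fp=0$ gives that $\Bir(\mathbb{P}^n)_d$ is quasi-projective for every $d$.
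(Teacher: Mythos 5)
Your proposal is correct and follows essentially the same route as the paper: the hypothesis $\HF_{\fp}(d)=0$ gives $s=N$, so $\mathcal{D}'$ is (induced by) the identity, $\Image(\mathcal{D}')=\tilde{H}_d$, and $\Psi_d$ is itself a bijection, transporting the quasi-projective structure of $\tilde{H}_d$ from \autoref{Tmain}(i) to $\Bir(X)_d$. Your extra bookkeeping (choosing $l_i=y_i$ and noting representatives coincide since $\fp_d=0$) simply spells out what the paper's one-line proof leaves implicit.
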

\begin{proof} According to \autoref{s}, $\HF_{\fp}(d)=N-s$. Hence, in the course of the proof  of \autoref{Tmain}(ii),  the map $\mathcal{D}'$ is the identity map. So that $\Psi_{d}$ is a bijection. 
\end{proof}
We continue with another immediate corollary of \autoref{Tmain}(iii).
\begin{Corollary} \label{limsup}Let $d$ be an integer,  $k$  an infinite field and $X\subseteq \mathbb{P}^n$ an irreducible projective non-degenerated variety with coordinate ring $R$, Hilbert function $\HF_{R}$ and multiplicity $e(R)$. Then
\begin{equation}\label{limsup} \limsup\limits_{d\rightarrow \infty}\frac{\edim(\Bir(X)_{d})}{d^{\dim(X)}}\leq\frac{(n+1)e(R)}{\dim(X)!}.
\end{equation}
\end{Corollary}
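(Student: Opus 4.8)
The plan is to combine the bound from \autoref{Tmain}(iii) with the asymptotic growth of the Hilbert function encoded by the multiplicity. First I would recall that \autoref{Tmain}(iii) gives $\edim(\Bir(X)_d)\leq (n+1)\HF_R(d)-1$ for every integer $d$, so dividing by $d^{\dim(X)}$ yields
\begin{equation*}
\frac{\edim(\Bir(X)_d)}{d^{\dim(X)}}\leq \frac{(n+1)\HF_R(d)-1}{d^{\dim(X)}}.
\end{equation*}
Thus it suffices to control $\limsup_{d\to\infty}\HF_R(d)/d^{\dim(X)}$.

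The second step invokes the standard theory of Hilbert polynomials: since $R$ is a standard graded $k$-algebra of Krull dimension $r=\dim(R)=\dim(X)+1$, for $d\gg 0$ the Hilbert function $\HF_R(d)$ agrees with the Hilbert polynomial $P_R(d)$, which has degree $r-1=\dim(X)$ and leading coefficient $e(R)/(r-1)!=e(R)/(\dim(X))!$. Consequently
\begin{equation*}
\lim_{d\to\infty}\frac{\HF_R(d)}{d^{\dim(X)}}=\frac{e(R)}{\dim(X)!}.
\end{equation*}
Multiplying by $n+1$ and noting that the $-1$ and the difference $\HF_R(d)-P_R(d)$ (which is eventually zero) contribute nothing in the limit, we obtain
\begin{equation*}
\limsup_{d\to\infty}\frac{\edim(\Bir(X)_d)}{d^{\dim(X)}}\leq (n+1)\lim_{d\to\infty}\frac{\HF_R(d)}{d^{\dim(X)}}=\frac{(n+1)e(R)}{\dim(X)!},
\end{equation*}
which is the claimed inequality \eqref{limsup}.

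There is essentially no obstacle here; the statement is a direct corollary, and the only mild care needed is the bookkeeping between $\HF_R$ and $P_R$ for small $d$ (irrelevant to the $\limsup$) and the convention that $R$ is a domain so $e(R)$ and $\dim(R)$ are the usual ones. One could alternatively phrase the argument entirely in terms of the well-known estimate $\HF_R(d)=\frac{e(R)}{(\dim X)!}d^{\dim X}+O(d^{\dim X-1})$, which makes the passage to the $\limsup$ immediate. I would present the proof in just a couple of lines, citing the standard reference for Hilbert polynomials and \autoref{Tmain}(iii).
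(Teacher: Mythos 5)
Your argument is correct and is exactly the route the paper intends: the corollary is stated there without proof as an immediate consequence of \autoref{Tmain}(iii) combined with the standard asymptotics $\HF_R(d)=\frac{e(R)}{\dim(X)!}d^{\dim X}+O(d^{\dim X-1})$ for a graded ring of dimension $\dim(X)+1$. Nothing is missing.
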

The natural question arising by the above corollary is the following
\begin{Question} For a projective variety $X$ what is the big O value of function $\edim(\Bir(X)_{d})$. In other words, what is 
\begin{equation}\label{lims} \mathcal{O}(\edim(\Bir(X)_{d}):=\min\{m: \limsup\limits_{d\rightarrow \infty}\frac{\edim(\Bir(X)_{d})}{d^{m}}<\infty\}?
\end{equation}
\end{Question}
When $X$ is a curve then \autoref{X curve} shows that $\mathcal{O}(\edim(\Bir(X)_{d})=0$. The same holds when $X$ is a varity  of general type. 
For $X=\mathbb{P}^{2}$, Bisi et al. in  \cite{BCM15} show that $\dim(\Bir(X)_{d})=4d+6$. Hence in conjunction with \autoref{limsup}, we have  $\mathcal{O}(\edim(\Bir(\mathbb{P}^{2})_{d})\in\{1,2\}.$
\medskip

To follow the positive results of \cite{BF13} for an arbitrary variety $X$,  the next structural property,  \autoref{morphism}, is essential. To state this result, we recall the Demazure-Serre definition of the topology of $\Bir(X)$.
We slightly modify the original definition as follows. 

\begin{Definition} \label{maptoX} Let $X$ be an irreducible projective variety. Let $\mathfrak{A}$ be a projective constructible set. A {\it Demazure-Serre} morphism from $\mathfrak{A}$ to $\Bir(X)$ is a map 
$\zeta:\mathfrak{A}\to \Bir(X)$, $
a\mapsto \zeta_a,$
such that for any irreducible component $A$ of $\mathfrak{A}$, the induced map $
\mathfrak{F}_{\zeta}:A\times X \dasharrow A\times X,  (a,x)\mapsto (a,\zeta_a(x))$
is a birational map. 

A subset $F\subset \Bir(X)$ is closed if and only if for any  Demazure-Serre morphism $\zeta:\mathfrak{A}\to \Bir(X)$, the pre-image  $\zeta^{-1}(F)$ is closed in $\mathfrak{A}$.
\end{Definition}
Next theorem is a generalization of \cite[Lemma 2.4(c)]{BF13} for any irreducible variety $X\subseteq \mathbb{P}^n$. Here, due to the generality, we present  a different proof from  \cite{BF13} .

\begin{Proposition}\label{morphism} With the same token as in  \autoref{Tmain}, the map $\psi:{H}_d\to \Bir(X)$ defined in \autoref{psif} is a Demazure-Serre morphism. 
\end{Proposition}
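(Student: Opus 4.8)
The plan is to verify directly that $\psi$ satisfies the two requirements in \autoref{maptoX}: that $H_d$ is a projective constructible set, and that for each irreducible component $A$ of $H_d$ the induced map $\mathfrak{F}_\psi : A\times X \dasharrow A\times X$, $(f,x)\mapsto (f,\psi_f(x))$, is birational. The first requirement is immediate from \autoref{Tmain}(i): $H_d$ is a quasi-projective variety, hence constructible, and it sits inside the projective space $\mathbb{P}^{(n+1)N-1}\cong W_d$. So the content is entirely in checking birationality of $\mathfrak{F}_\psi$ over each component.

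First I would make $\mathfrak{F}_\psi$ explicit. Writing a point of $A$ as $f=(f_0:\cdots:f_n)$ with the $f_i$ homogeneous of degree $d$ in $x_0,\ldots,x_n$ and coefficients that are coordinates on $A$, the map $\mathfrak{F}_\psi$ is given on the second factor by $x\mapsto (f_0(x):\cdots:f_n(x))$ with the coefficients of the $f_i$ now treated as functions on $A$; this is a genuine rational map $A\times X\dasharrow A\times X$ because the open locus where it is defined is nonempty (it is nonempty fiberwise, since each $\psi_f$ is dominant, hence defined on a dense open of $X$, for $f\in H_d$). To prove birationality it suffices, since $A\times X$ is irreducible, to exhibit a rational inverse, and for this I would use the inverse construction already baked into the definition of $H_d$: recall from the proof of \autoref{Tmain}(i) that $H_d=\pi_2(Y)$ where $Y\subseteq W_{d'}\times W_d$ parametrizes pairs $(g,f)$ with $g\circ f$ and $f\circ g$ equal to the identity modulo $\fp$, and $d'=B(X,d)$. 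The key point is that over the dense open subset $A^\circ\subseteq A$ where the fiber of $\pi_2|_Y$ is, generically, a single point (this follows because a birational map of a variety to itself has a unique inverse, and the inverse of clear/polynomial degree $\le d'$ is pinned down modulo $\fp$), one gets a rational section $A^\circ\to Y$, $f\mapsto (g(f),f)$, whose coordinates $g(f)=(g_0(f):\cdots:g_n(f))$ are rational functions on $A$. Then the candidate inverse of $\mathfrak{F}_\psi$ is $(f,y)\mapsto (f, g_0(f)(y):\cdots:g_n(f)(y))$.

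The verification then reduces to two checks. One: $(g(f),f)\in Y$ for $f$ in a dense open of $A$, i.e.\ the composite $\mathfrak{F}_\psi$ followed by this candidate is the identity on a dense open of $A\times X$ — but this is exactly the defining relations $h_i x_j - h_j x_i\in\fp$ and $h'_i x_j - h'_j x_i\in\fp$ cutting out $Y$, which hold identically on $Y$ and therefore, after pulling back along the section, on a dense open of $A$; evaluating at a general $x\in X$ gives $\psi_{g(f)}\circ\psi_f = \mathrm{id}$ and $\psi_f\circ\psi_{g(f)}=\mathrm{id}$ as rational self-maps of $X$, as required. Two: the coordinate functions $g_i(f)$ on $A$ are honest rational functions, which is where I would invoke Chevalley / the principal theorem of elimination theory, exactly as in \autoref{Tmain}(i), to see that $\pi_2|_Y$ is dominant onto each component of $\pi_2(Y)\supseteq H_d$ with generically finite — in fact generically one-point — fibers, so that a rational section exists over a dense open of each component $A$. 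I expect the main obstacle to be precisely this last point: showing that the fiber of $\pi_2|_Y$ over a general point of a component $A$ of $H_d$ is a \emph{single} point rather than merely finite, so that the section (and hence the inverse of $\mathfrak{F}_\psi$) is well defined generically. This is where uniqueness of the inverse of a birational map — and the fact that for $f\in H_d$ the inverse $\psi_f^{-1}$ has a representative of polynomial degree $\le d' = B(X,d)$, by \autoref{Pinversedegree}, whose coefficients are determined modulo $\fp$ — is doing the real work; everything else is the bookkeeping of transporting the fiberwise statement "$\psi_f$ is birational" to the relative statement "$\mathfrak{F}_\psi$ is birational" using irreducibility of $A\times X$.
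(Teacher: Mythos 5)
There is a genuine gap, and it sits exactly at the point you flag as ``the main obstacle''. Your construction of a rational inverse for $\mathfrak{F}_{\psi}$ rests on the claim that $\pi_2|_Y$ has generically one-point fibers over a component $A$ of $H_d$, ``because a birational map has a unique inverse and the inverse of degree $\le d'$ is pinned down modulo $\fp$''. That is not true. The fiber of $\pi_2|_Y$ over $f$ is the set of \emph{all} $(n+1)$-tuples of forms of degree exactly $d'=B(X,d)$ in $S$ that represent $\psi_f^{-1}$ and map $X$ into $X$; uniqueness of $\psi_f^{-1}$ as a rational map does not pin down such a representative. If $(g_0:\cdots:g_n)$ is a representative of minimal degree $e$, then for every nonzero $h\in S_{d'-e}$ the tuple $(hg_0:\cdots:hg_n)$ lies in the same fiber, and so does any tuple obtained by adding elements of $\fp_{d'}$ coordinatewise; since $B(X,d)$ is an enormous bound, $e<d'$ essentially always, so the fibers are positive-dimensional, not finite and certainly not single points. (Uniqueness up to a scalar, via \cite[Proposition 1.1]{Si04}, needs both grade $\ge 2$ of the base ideal and the representative to be taken in the minimal degree; neither is available for the inverse at degree $d'$, and note also that the statement concerns $H_d$, not $\tilde H_d$, so even $f$ itself is not assumed to have grade-$2$ base ideal.) Once the one-point-fiber claim is gone, the existence of a rational section $f\mapsto(g(f),f)$ of $\pi_2|_Y$ over a dense open of $A$ is no longer automatic --- a dominant morphism with positive-dimensional fibers need not admit a rational section --- so the candidate inverse $(f,y)\mapsto(f,g(f)(y))$ is never actually constructed. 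One could try to repair this by base-changing to an irreducible component of $Y$ dominating $A$, where a tautological inverse does exist, and then descending the degree computation back to $A\times X$, but that is additional work your proposal does not supply.

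The paper avoids the issue by not constructing an inverse at all. It first proves that $\mathfrak{F}_{\psi}:A\times X\dasharrow A\times X$ is dominant: if a polynomial $Q(\yy,\xx)$, homogeneous in $\xx$, vanished on the image, then $Q(\aa,f_{\aa}(\xx))=0$ for every $\aa\in A$ by irreducibility of $X$, and substituting $f_{\aa}^{-1}$ and using $f_{\aa}(f_{\aa}^{-1}(\xx))=\alpha(\xx)\xx$ forces $Q\equiv 0$. Dominance between varieties of the same dimension gives generic finiteness over an open set $T$, and the degree of that finite morphism is shown to be $1$ by producing a point of $T$ inside some $\{\aa\}\times V_{\aa}$, where $f_{\aa}:U_{\aa}\to V_{\aa}$ is an isomorphism, so the fiber there is a single point; the existence of such a point is again proved by the same polynomial-vanishing trick applied to an equation of the complement of $T$. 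Your outline of the composition identities (the relations $h_ix_j-h_jx_i\in\fp$ cutting out $Y'$) is fine as far as it goes, but without the section it has nothing to be applied to; I would either adopt the paper's dominance-plus-degree-one argument or fill in the descent argument sketched above.
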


\begin{proof} To prove that $\mathfrak{F}_{\psi}:{H}_d\times X\to {H}_d\times X$, $(f,x)\mapsto (x,\psi_{f(x)})$ is birational, we take an irreducible open affine subset $A\subseteq {H}_d$ and show that the map $\mathfrak{F}_{\psi}:A\times X\to A\times X$ is birational. To do this, we  first show that this map is dominant. 

 Let $\{y_{ij}:i=0,\ldots,n; j=1,\ldots,N\}$ be a set of variables defining the coordinate ring of $A$. Let $\yy_i=(y_{i1},\ldots,y_{iN})$ $i=0,\ldots,n$  and  $k[A]=k[\yy_0,\ldots,\yy_n]/\fa$ for some ideal $\fa$. For any $\aa\in A $, $\psi_{\aa}:X\dasharrow X$ is presented by $ f_{\aa}=(f_{\aa_0}:\cdots:f_{\aa_n})$ as defined in (\autoref{fa}).  Assume that the closure of the image of $\mathfrak{F}_{\psi}$ is contained in a closed subset of $A\times X$ defined by a polynomial $Q(\yy,\xx)$.  We have $Q(\yy,\xx)$  is homogeneous in terms of $\xx$-variables and  $Q(\aa,f_{\aa}(u))=0$ for any $u\in U$ for some open subset $U\subseteq X$. Since $X$ is irreducible, $Q(\aa,f_{\aa}(\xx))=0$. Since $f_{\aa}$ is birational there exists a polynomial $\alpha(\xx)$ with $f_{\aa}(f^{-1}_{\aa}(\xx))=\alpha(\xx)\xx$. Hence for some integer $m$, $\alpha(\xx)^mQ(\aa,\xx)=0$ for all $\aa$. Thus  $Q(\aa,\xx)=0$ for all $\aa$; yielding $Q(\yy,\xx)=0$. This proves that $\mathfrak{F}_{\psi}$ is dominant.

Since $\mathfrak{F}_{\psi}$ is a dominant map of varieties of the same dimension, it is a generically finite map. That means there exists an open subset $T\subset A\times X$ such that $$\mathfrak{F}_{\psi}:\mathfrak{F}_{\psi}^{-1}(T)\rightarrow T$$ is a finite morphism. We show that the degree of this finite morphism is $1$.

For each $\aa\in A$, there exist open subsetes $U_{\aa}\subseteq X$ and $V_{\aa}\subseteq X$ with $f_{\aa}:U_{\aa}\rightarrow V_{\aa}$ is an isomorphism. We claim that for some $\aa\in A$, the intersection $(\aa\times V_{\aa})\cap T$ is not empty. Since for any point in  $(\aa\times V_{\aa})\cap T$ the pre-image is a single set this proves that the degree of the above finite morphism is $1$. To prove the claim, let $P(\yy,\xx)$  be a polynomial homogeneous in $\xx$ variables that defines one of the equations of the complement of $T$. If by contrary, we suppose that for any $\aa\in A$ , $(\aa\times V_{\aa})\cap T=\emptyset$. Then for any $\aa\in A$ and $u\in U_{\aa}$, $P(\aa,f_{\aa}(u))=0$, using the birationality of $f_{\aa}$, we find a polynomial $\alpha(\xx)$ and an integer $m$ such that $\alpha(\xx)^mP(\aa,\xx)=0$ for all $\aa\in A$, accordingly $P(\yy,\xx)=0$ that implies   that  $T=\emptyset$ which is absurd. Hence there exists an $\aa\in A$ such that 
$(\aa\times V_{\aa})\cap T\neq \emptyset$.

\end{proof}

Now, having  \autoref{Tmain} and  \autoref{morphism}, one can follow the techniques in \cite[2.6--2.10]{BF13} to prove the following corollary.
\begin{Notation} We denote $\Image(\mathcal{D}')$ in  \autoref{Tmain}, by $\tilde{\tilde{H}}_d$, and the map $\Image(\mathcal{D}')\xrightarrow{\Phi }  \Bir(X)_d$ by $\Phi_d:\tilde{\tilde{H}}_d\to  \Bir(X)_d$.
\end{Notation}

\begin{Corollary}\label{CCC} With the same notation introduced before, during and after  \autoref{Tmain}, the following hold
\begin{enumerate}
\item{Let $A$ and $X$ be irreducible algebraic varieties and $\rho:A\to \Bir(X)$ be a Demazure-Serre morphism whose image is inside  $\Bir(X)_*$. There exists an open affine covering $\{A_i\}$ of $A$ such that for each $i$, there exists an integer $d_i$ and a morphism $\rho_i:A_i\to \tilde{\tilde{H}}_{d_i}$ such that the restriction of $\rho$ to $A_i$ is equal to $\Phi_{d_i}\circ \rho_i$.}
\item{A set $F\subseteq \Bir(X)_*$ is closed if and only if $\Phi_d^{-1}(F)$ is closed in $\tilde{\tilde{H}}_{d}$ for any $d$. }
\item{For any $d$, the set $\Bir(X)_{\leq d}$ is closed in $\Bir(X)_*$.}
\item{The topology of $\Bir(X)_*$ is the inductive limit topology of $\Bir(X)_{\leq d}$.}
\end{enumerate}
\end{Corollary}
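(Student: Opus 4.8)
The plan is to obtain all four assertions as formal consequences of Theorem~\ref{Tmain}, Proposition~\ref{morphism}, and the two structural facts already recorded in the excerpt: that $\tilde{H}_d\to \tilde{\tilde H}_d = \Image(\mathcal{D}')$ is a morphism of quasi-projective varieties with $\Phi_d$ bijective, and that the inverse of a birational map of clear degree $d$ has polynomial degree at most $B(X,d)$ (Proposition~\ref{Pinversedegree}). In other words, I would follow the line of \cite[2.6--2.10]{BF13} essentially verbatim, using Proposition~\ref{morphism} to play the role of the morphism property of $\psi$ and Theorem~\ref{Tmain} to supply the algebraic models $\tilde{\tilde H}_d$ of $\Bir(X)_d$.

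For part (1), given a Demazure--Serre morphism $\rho:A\to \Bir(X)$ with image in $\Bir(X)_*$, the induced map $\mathfrak F_\rho:A\times X\dasharrow A\times X$ is birational, hence on a dense open $A^\circ\subseteq A$ the representatives $\rho_a$ can be written coherently in families; decomposing $A^\circ$ into pieces on which the degree of this coherent representative (taken with base ideal of grade $\geq 2$, i.e.\ \emph{clear}) is a constant $d_i$, one gets a finite affine cover $\{A_i\}$ together with morphisms $A_i\to \tilde{H}_{d_i}$ sending $a$ to that representative; composing with $\mathcal{D}'$ gives $\rho_i:A_i\to \tilde{\tilde H}_{d_i}$, and by construction of $\Phi_{d_i}$ (which just reads off the map over $X$) we have $\rho|_{A_i}=\Phi_{d_i}\circ\rho_i$. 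The only subtlety is making the family of representatives morphic: this is exactly the content of Proposition~\ref{morphism} applied in reverse together with the fact that $H_d\to\Bir(X)$ is a Demazure--Serre morphism, so the universal family over $H_d$ pulls back along the classifying map attached to $\rho$.

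Part (2) is then immediate from (1) and Definition~\ref{maptoX}: a set $F\subseteq \Bir(X)_*$ is closed iff $\zeta^{-1}(F)$ is closed for every Demazure--Serre $\zeta$ into $\Bir(X)$; by (1) every such $\zeta$ landing in $\Bir(X)_*$ factors locally through some $\Phi_{d}$, and since each $\rho_i$ is a morphism of varieties, $\zeta^{-1}(F)\cap A_i=\rho_i^{-1}(\Phi_{d_i}^{-1}(F))$, so $\zeta^{-1}(F)$ is closed for all $\zeta$ precisely when $\Phi_d^{-1}(F)$ is closed in $\tilde{\tilde H}_d$ for all $d$ (the ``only if'' direction uses that $\Phi_d\circ\mathcal{D}':\tilde H_d\to\Bir(X)$ is itself a Demazure--Serre morphism, which follows from Proposition~\ref{morphism} since $\tilde H_d\subseteq H_d$). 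For part (3), write $\Bir(X)_{\leq d}=\bigcup_{e\leq d}\Bir(X)_e$ and test closedness via (2): for a fixed $d'$ one must show $\Phi_{d'}^{-1}(\Bir(X)_{\leq d})$ is closed in $\tilde{\tilde H}_{d'}$, i.e.\ that ``the map $\psi_f$ admits \emph{some} representative of clear degree $\leq d$'' is a closed condition on $f\in\tilde{\tilde H}_{d'}$; this is where the inverse bound is used, exactly as in \cite[2.9]{BF13}, since lowering the degree of a representative over $X$ is controlled by vanishing of finitely many minors (cf.\ the surjectivity argument for $\tau^m$ in the proof of Theorem~\ref{Elimination}) together with the degree bound $B(X,d)$ that keeps everything inside a fixed $W_{d'}$. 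Finally part (4) is the formal statement that the topology defined in Definition~\ref{maptoX} coincides with $\varinjlim \Bir(X)_{\leq d}$: a set is closed in the inductive limit iff its intersection with each $\Bir(X)_{\leq d}$ is closed, and by (2)--(3) this is equivalent to $\Phi_d^{-1}(F)$ being closed for all $d$, which is the definition of closed in $\Bir(X)_*$.

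The main obstacle is part (1): turning the abstract birationality of $\mathfrak F_\rho$ into an honest family of polynomial representatives that varies morphically, and in particular arranging that on each piece of the cover the representative has base ideal of grade $\geq 2$ so that it lands in $\tilde H_{d_i}$ rather than merely $H_{d_i}$. This requires the generic uniqueness of the clear representative (Proposition~1.1 of \cite{Si04}, already invoked in the excerpt) to spread out over an open set, plus a semicontinuity argument to see that the clear degree is locally constant after refining the cover; once (1) is in place, (2)--(4) are formal diagram-chasing exactly as in \cite[2.6--2.10]{BF13}, and I would simply cite that reference for the routine verifications rather than reproduce them.
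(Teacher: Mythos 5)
Your overall strategy---deduce all four parts from \autoref{Tmain}, \autoref{morphism} and \autoref{Pinversedegree} by following \cite[2.6--2.10]{BF13}---is exactly the route the paper indicates; in fact the paper offers nothing beyond that citation, so there is no more detailed argument to compare against. The difficulty is the step you yourself identify as the crux, part (1), and your proposed resolution of it does not work. You claim that a ``semicontinuity argument'' lets you refine the cover so that the clear degree is locally constant, whence $\rho|_{A_i}$ lands in $\Bir(X)_{d_i}$ and factors through $\tilde{\tilde{H}}_{d_i}$. Clear degree is not locally constant in families, and no refinement by \emph{open} sets can make it so when it drops at a special point: take $X=\mathbb{P}^2$, $A=\mathbb{A}^1$, $\rho_t=(xz:yz+tx^2:z^2)$, which is a Demazure--Serre morphism (in the chart $z=1$ it is the triangular automorphism $(x,y)\mapsto(x,y+tx^2)$, and $\mathfrak{F}_\rho$ has the obvious inverse). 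For $t\neq 0$ the base ideal $(xz,\,yz+tx^2,\,z^2)$ has codimension, hence grade, $2$, so $\rho_t$ has clear degree $2$ and not $1$; while $\rho_0$ is the identity, whose degree-$2$ representatives are all of the form $(\ell x:\ell y:\ell z)$ with base ideal of grade $1$, so $\rho_0\in\Bir(\mathbb{P}^2)_1\setminus\Bir(\mathbb{P}^2)_2$. Every open set containing $t=0$ therefore meets two different clear degrees, and the naive factorization through a single exact-degree space $\tilde{\tilde{H}}_{d_i}$ breaks down. In \cite{BF13} this is precisely why the local factorization goes through $H_d$ (degree-$d$ representatives with common factors allowed, whose image is all maps of degree at most $d$), not through an exact-degree locus; transporting that mechanism to the clear-degree spaces over a general, possibly non-factorial, $X$---where multiplying a lower-degree clear representative by a common factor destroys the grade-$\geq 2$ condition---is the actual content required for (1), and it is missing from your sketch.

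A secondary gap is in (3): you assert that ``$\psi_f$ admits some representative of clear degree $\leq d$'' is a closed condition on $\tilde{\tilde{H}}_{d'}$, but the clearness condition (grade $\geq 2$) is an \emph{open} condition by \autoref{LG2}, so eliminating the auxiliary lower-degree representative from the incidence locus a priori only yields a constructible set; closedness of $\Phi_{d'}^{-1}(\Bir(X)_{\leq d})$ needs a genuine argument (in \cite{BF13} it again rests on the common-factor description of lower-degree representatives, which is unavailable here). Parts (2) and (4) are indeed formal once (1) and (3) are secured, so the proposal stands or falls with those two points.
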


\bibliographystyle{alpha}
\bibliography{ref}

\begin{thebibliography}{BCM15}

\bibitem[BCM15]{BCM15}
Cinzia Bisi, Alberto Calabri, and Massimiliano Mella.
\newblock On plane {C}remona transformations of fixed degree.
\newblock {\em J. Geom. Anal.}, 25(2):1108--1131, 2015.

\bibitem[BF13]{BF13}
J\'{e}r\'{e}my Blanc and Jean-Philippe Furter.
\newblock Topologies and structures of the {C}remona groups.
\newblock {\em Ann. of Math. (2)}, 178(3):1173--1198, 2013.

\bibitem[Bla17]{B17}
J\'{e}r\'{e}my Blanc.
\newblock Algebraic structures of groups of birational transformations.
\newblock In {\em Algebraic groups: structure and actions}, volume~94 of {\em
  Proc. Sympos. Pure Math.}, pages 17--30. Amer. Math. Soc., Providence, RI,
  2017.

\bibitem[BS13]{BS}
M.~P. Brodmann and R.~Y. Sharp.
\newblock {\em Local cohomology}, volume 136 of {\em Cambridge Studies in
  Advanced Mathematics}.
\newblock Cambridge University Press, Cambridge, second edition, 2013.
\newblock An algebraic introduction with geometric applications.

\bibitem[Cal94]{Call94}
Frederick~W. Call.
\newblock A theorem of {G}rothendieck using {P}icard groups for the algebraist.
\newblock {\em Math. Scand.}, 74(2):161--183, 1994.

\bibitem[CL13]{CL13}
Serge Cantat and St\'{e}phane Lamy.
\newblock Normal subgroups in the {C}remona group.
\newblock {\em Acta Math.}, 210(1):31--94, 2013.
\newblock With an appendix by Yves de Cornulier.

\bibitem[DHS12]{DHS12}
A.~V. Doria, S.~H. Hassanzadeh, and A.~Simis.
\newblock A characteristic-free criterion of birationality.
\newblock {\em Adv. Math.}, 230(1):390--413, 2012.

\bibitem[GS98]{grayson1998macaulay}
D~Grayson and M~Stillman.
\newblock Macaulay 2--a system for computation in algebraic geometry and
  commutative algebra, 1997.
\newblock {\em Available via anonymous ftp from math. uiuc. edu}, 1998.

\bibitem[HS06]{HunekeSwanson}
Craig Huneke and Irena Swanson.
\newblock {\em Integral closure of ideals, rings, and modules}, volume 336 of
  {\em London Mathematical Society Lecture Note Series}.
\newblock Cambridge University Press, Cambridge, 2006.

\bibitem[HS17]{HS17}
S.~H. Hassanzadeh and A.~Simis.
\newblock Bounds on degrees of birational maps with arithmetically
  {C}ohen-{M}acaulay graphs.
\newblock {\em J. Algebra}, 478:220--236, 2017.

\bibitem[Mat86]{M86}
Hideyuki Matsumura.
\newblock {\em Commutative ring theory}, volume~8 of {\em Cambridge Studies in
  Advanced Mathematics}.
\newblock Cambridge University Press, Cambridge, 1986.
\newblock Translated from the Japanese by M. Reid.

\bibitem[PR12]{pan2012some}
Ivan Pan and Alvaro Rittatore.
\newblock Some remarks about the zariski topology of the cremona group.
\newblock {\em arXiv preprint arXiv:1212.5698}, 2012.

\bibitem[Ser10]{Se10}
Jean-Pierre Serre.
\newblock Le groupe de {C}remona et ses sous-groupes finis.
\newblock Number 332, pages 75--100. 2010.
\newblock S\'{e}minaire Bourbaki. Volume 2008/2009. Expos\'{e}s 997--1011.

\bibitem[Sim04]{Si04}
Aron Simis.
\newblock Cremona transformations and some related algebras.
\newblock {\em J. Algebra}, 280(1):162--179, 2004.

\bibitem[SUV01]{SUV01}
Aron Simis, Bernd Ulrich, and Wolmer~V. Vasconcelos.
\newblock Codimension, multiplicity and integral extensions.
\newblock {\em Math. Proc. Cambridge Philos. Soc.}, 130(2):237--257, 2001.

\end{thebibliography}

\end{document}